

 



\documentclass[12pt]{amsart}

\usepackage[margin=1.15in]{geometry}

\usepackage{amsmath,amscd,amssymb,amsfonts,latexsym,wasysym, mathrsfs, mathtools,hhline,color, bm}
\usepackage[all, cmtip]{xy}

\usepackage{url}

\definecolor{hot}{RGB}{65,105,225}

\usepackage[pagebackref=true,colorlinks=true, linkcolor=hot ,  citecolor=hot, urlcolor=hot]{hyperref}

\usepackage{ textcomp }
\usepackage{ tipa }

\usepackage{graphicx,enumerate}

\theoremstyle{plain}
\newtheorem{theorem}{Theorem}[section]
\newtheorem{proposition}[theorem]{Proposition}

\newtheorem{lm}[theorem]{Lemma}

\newtheorem{conj}[theorem]{Conjecture}
\newtheorem{lemma}[theorem]{Lemma}
\newtheorem{thrm}[theorem]{Theorem}

\theoremstyle{definition}

\newtheorem{definition}[theorem]{Definition}

\newtheorem{remark}[theorem]{Remark}

\newtheorem{ex}[theorem]{Example}
\newtheorem*{ex*}{Example}
\newtheorem{problem}{Problem}

\def\be{\begin{equation}}
\def\ee{\end{equation}}

\def\bt{\begin{thrm}}
\def\et{\end{thrm}}

\def\bc{\begin{cor}}
\def\ec{\end{cor}}

\def\br{\begin{rmk}}
\def\er{\end{rmk}}

\def\bp{\begin{prop}}
\def\ep{\end{prop}}

\def\bl{\begin{lm}}
\def\el{\end{lm}}

\def\bex{\begin{ex}}
\def\eex{\end{ex}}

\def\bd{\begin{defn}}
\def\ed{\end{defn}}

\newcommand\sO{{\mathcal O}}

\newcommand\sS{\mathcal{S}}

\def\cZ{\mathcal{Z}}


\newcommand{\Z}{\mathbb{Z}}

\newcommand{\R}{\mathbb{R}}
\newcommand{\C}{\mathbb{C}}

\renewcommand{\Re}{\operatorname{Re}}
\renewcommand{\Im}{\operatorname{Im}}


\newcommand\pp{{\mathbb{P}}}

\newcommand\rr{{\mathbb{R}}}
\newcommand\cc{{\mathbb{C}}}




\DeclareMathOperator{\reg}{reg}                  
                  

\DeclareMathOperator{\Bl}{bl}

\DeclareMathOperator{\EDdeg}{EDdeg}

\def\bC{\mathbb{C}}
\def\RR{\mathbb{R}}

\def\bP{\mathbb{P}}

\def\bQ{\mathbb{Q}}

\def\balpha{{\bm\alpha}}
\def\bbeta{{\bm\beta}}

\DeclareMathOperator{\Jac}{{Jac}}

\title[ED degree of the multiview variety]{Euclidean distance degree of the multiview variety}

\author{Laurentiu G. Maxim}
\address{Department of Mathematics,         University of Wisconsin-Madison,  480 Lincoln Drive, Madison WI 53706-1388, USA.}
\email {maxim@math.wisc.edu}\urladdr{https://www.math.wisc.edu/~maxim/}
\author{Jose Israel Rodriguez}
\address{Department of Mathematics,         University of Wisconsin-Madison,  480 Lincoln Drive, Madison WI 53706-1388, USA.}
\email {jose@math.wisc.edu}\urladdr{http://www.math.wisc.edu/~jose/}
\author{Botong Wang}
\address{Department of Mathematics,         University of Wisconsin-Madison,  480 Lincoln Drive, Madison WI 53706-1388, USA.}
\email {wang@math.wisc.edu}\urladdr{http://www.math.wisc.edu/~wang/}

\keywords{Euclidean distance degree, multiview variety, triangulation problem, non-proper Morse theory, Euler-Poincar\'e characteristic, local Euler obstruction}

\subjclass[2010]{13P25, 57R20, 90C26}




\begin{document}

\date{\today}

\maketitle

\begin{abstract}  
The Euclidean distance degree of an algebraic variety is a well-studied topic in applied algebra and geometry. It has direct applications in geometric modeling, computer vision, and statistics. We use non-proper Morse theory to give a topological interpretation of the Euclidean distance degree of an affine variety in terms of Euler characteristics. As a concrete application, we solve the open problem in computer vision of determining the Euclidean distance degree of the affine multiview variety. 
\end{abstract}



\section{Introduction}\label{intro}
To any $\bm{\alpha}=(\alpha_1, \ldots, \alpha_n)\in \bC^n$, one associates the \emph{squared Euclidean distance function}  
 $f_{\bm{\alpha}}: \bC^n\to \bC$ given by
$$f_{\bm{\alpha}}(z_1,\ldots,z_n):=\sum_{1\leq i\leq n}(z_i-\alpha_i)^2.$$
If $X$ is an irreducible closed subvariety of $\bC^n$ then, for generic choices of $\bm{\alpha}$, the function $f_{\bm{\alpha}}|_{X_{\reg}}$ has finitely many critical points on the smooth locus $X_{\reg}$ of $X$. Moreover, this number of critical points is independent of the generic choice of $\bm{\alpha}$, so it defines an intrinsic invariant of $X$, called the {\it Euclidean distance degree} (or ED degree) of $X$; see \cite{DHOST}. It is denoted by $\EDdeg(X)$. 

The motivation for studying ED degrees comes from the fact that many models in data science or engineering are realized as real algebraic varieties, for which one needs to solve a {\it nearest point problem}. Specifically, for such a real algebraic variety $X \subset \RR^n$, one needs to solve the following:

\begin{problem}[Nearest point]\label{problem:real}
Given $\balpha\in\RR^n$, compute $\balpha^*\in X_{\reg}$ that minimizes the squared Euclidean distance 
from the given point $\balpha$.
\end{problem}

When a solution to Problem~\ref{problem:real}
exists, computing all of the critical points of $f_{\bm{\alpha}}$ on the Zariski closure of $X$ in $\bC^n$ will provide one way to find the answer. 
Thus, the ED degree gives an algebraic measure of complexity of this problem.

\medskip 

This paper deals with a very specific nearest point problem, motivated by the {\it triangulation problem} in computer vision and the {\it multiview conjecture} of \cite[Conjecture 3.4]{DHOST}. In computer vision, triangulation refers to the process of reconstructing a point in 3D space from its camera projections onto several images. The triangulation problem has many practical applications, e.g.,  in tourism, for reconstructing the 3D structure of a tourist attraction based on a large number of online pictures \cite{AS}; in robotics, for creating a virtual 3D space from multiple cameras mounted on an autonomous vehicle;  in filmmaking, for  adding animation and graphics to a movie scene after everything is already shot, etc.

The triangulation problem is in theory trivial to solve: if the image points are given with infinite precision, then two cameras suffice to determine the 3D point. In practice, however, various sources of ``noise'' (such as pixelation or lens distortion) lead to inaccuracies in the measured image coordinates. The problem, then, is to find a 3D point which optimally fits the measured image points.

A 3D world point gives rise to $n$ 2D projections in $n$ given cameras. Roughly speaking, the space of all possible $n$-tuples of such projections is the {\it affine multiview variety} $X_n$; see {\cite[Example 3.1]{DHOST}} and Section \ref{cv} for more details. The above optimization problem translates into finding a point $\balpha^*\in X_n$ of minimum distance to a (generic) point $\balpha \in \mathbb{R}^{2n}$ obtained by collecting the 2D coordinates of $n$ noisy images of the given 3D point. In order to find such a minimizer algebraically, one regards $X_n$ as a complex algebraic variety and examines all complex critical points of the squared Euclidean distance function $f_{\balpha}$ on $X_n$. 
Since by construction the complex algebraic variety $X_n$ is smooth,
one is therefore interested 
in computing the Euclidean distance degree $\EDdeg(X_n)$ of the affine multiview variety $X_n$.

An explicit conjectural formula for the Euclidean distance degree $\EDdeg(X_n)$ was proposed in \cite[Conjecture 3.4]{DHOST}, based on numerical computations from \cite{SSN} for configurations involving $n \leq 7$ cameras:
\begin{conj}\label{cedc}\cite{DHOST} The Euclidean distance degree of the affine multiview variety $X_n$ is given by:
\be\label{edc}
\EDdeg(X_n)=\frac{9}{2}n^3-\frac{21}{2}n^2+8n-4.
\ee
\end{conj}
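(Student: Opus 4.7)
The plan is to identify $X_n$ with an explicit open subset of $\mathbb{P}^3$, apply the topological interpretation of the ED degree proved earlier in the paper via non-proper Morse theory, and compute the resulting Euler characteristics.

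For generic cameras $A_1, \ldots, A_n$, the rational multiview map is birational onto its image and restricts to an algebraic isomorphism $\Phi : U \to X_n$, where
\[
U \;=\; \mathbb{P}^3 \setminus \bigcup_{i=1}^n H_i
\]
and $H_i \subset \mathbb{P}^3$ is the preimage under the $i$-th camera of the line at infinity in the $i$-th image plane (this hyperplane automatically contains the $i$-th camera center). Writing the third row of $A_i$ as a linear form $L_{i3}$ cutting out $H_i$, the function $f_{\bm\alpha}$ pulls back via $\Phi$ to the rational function $\sum_i Q_i / L_{i3}^2$, with each $Q_i$ a quadratic form determined by $\bm\alpha$ and $A_i$. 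Clearing denominators, the level set $\{f_{\bm\alpha} = t\} \cap X_n$ corresponds to $V_t \cap U$ where $V_t \subset \mathbb{P}^3$ is a hypersurface of degree $2n$.

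Next I would invoke the paper's topological formula to express
\[
\EDdeg(X_n) \;=\; (-1)^{\dim X_n}\bigl(\chi(X_n) - \chi(F_t)\bigr) \;=\; \chi(F_t) - \chi(X_n),
\]
where $F_t = V_t \cap U$ is a generic fiber and $\dim X_n = 3$. The first Euler characteristic is immediate from inclusion-exclusion applied to $H_1 \cup \cdots \cup H_n \subset \mathbb{P}^3$, where generic triples meet in a point and generic quadruples are empty:
\[
\chi(X_n) \;=\; 4 - 3n + 2\binom{n}{2} - \binom{n}{3} \;=\; \frac{-n^3 + 9n^2 - 26n + 24}{6}.
\]

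The main obstacle is the computation of $\chi(F_t)$, because $V_t$ is far from generic: direct inspection shows that $V_t$ is singular along every line $H_i \cap H_j$ (each of which is contained in $V_t$), and that $V_t \cap H_i$ splits as the conic $\{Q_i|_{H_i} = 0\}$ together with the $n-1$ lines $\{L_{k3}|_{H_i} = 0\}$, $k \neq i$. To handle these singularities, the plan is to resolve the indeterminacies of the pencil $\{V_t\}_{t \in \mathbb{P}^1}$ by blowing up $\mathbb{P}^3$ along its base locus---the union of the $n$ conics together with the $\binom{n}{2}$ lines $H_i \cap H_j$---then push the topological formula through to the resolution and compute $\chi(\widetilde{V}_t)$ via adjunction on the smooth ambient space. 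An alternative route would be to stay on $\mathbb{P}^3$ and employ the local Euler obstruction techniques highlighted in the paper's keywords, assembling $\chi(F_t)$ from contributions of the singular strata of $V_t$ via a stratified inclusion-exclusion over the arrangement $\bigcup_i H_i$. Either route should yield a cubic polynomial $\chi(F_t)$ in $n$ which, combined with $\chi(X_n)$, produces the claimed value $\tfrac{9}{2}n^3 - \tfrac{21}{2}n^2 + 8n - 4$.
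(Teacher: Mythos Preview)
Your identification $X_n\cong U=\bP^3\setminus\bigcup_i H_i$ is incorrect, and this is a genuine gap. The multiview map $\bP^3\dashrightarrow(\bP^2)^n$ is undefined at the camera centers $P_i$, and the paper shows that for $n\ge 3$ and generic cameras it extends to a closed embedding $F:\Bl_{P_1,\ldots,P_n}\bP^3\hookrightarrow(\bP^2)^n$ with image $Y_n$. Thus $X_n=Y_n\setminus D_\infty$ is the blowup minus the strict transforms $D_{\infty,i}$ of the hyperplanes $H_i$. Each exceptional divisor $E_i$ meets $D_\infty$ only along $E_i\cap D_{\infty,i}\cong\bP^1$, so $E_i\setminus D_\infty\cong\C^2$ lies inside $X_n$ but is \emph{not} in the image of your $\Phi$. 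Concretely, these are the ``limit'' multiview configurations obtained by approaching $P_i$ from various directions: the $j$-th image ($j\neq i$) is pinned at $A_j\!\cdot\! P_i$, while the $i$-th image ranges over all of $\C^2$. Hence $\chi(X_n)=\chi(U)+n$, not $\chi(U)$; your inclusion--exclusion gives $\chi(U)=\tfrac{1}{6}(-n^3+9n^2-26n+24)$, whereas the paper's values yield $\chi(X_n)=\chi(Y_n)-\chi(D_\infty)=\tfrac{1}{6}(-n^3+9n^2-20n+24)$.

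The mismatch propagates: the zero locus of $f_{\bbeta}$ meets each $E_i\cap X_n$ in a smooth affine conic of Euler characteristic $0$, so your $\chi(F_t)$ happens to agree with the correct $\chi(X_n\cap H_Q)$, but your $\chi(X_n)$ is off by $n$. Carried through, your formula would return $\EDdeg(X_n)+n$. The paper avoids this by working on $Y_n\cong\Bl_{P_1,\ldots,P_n}\bP^3$ from the start and decomposing $\chi(X_n\cap U_{\bbeta})=\chi(Y_n)-\chi(D_Q)-\chi(D_\infty)+\chi(D_Q\cap D_\infty)$. The hard term $\chi(D_Q)$ is handled by comparing $D_Q$ to a smooth member of its linear system via a formula involving Milnor fibers along a Whitney stratification of the singular locus $\bigcup_{i<j}D_{\infty,i}\cap D_{\infty,j}$---this is precisely the ``stratified'' computation you allude to but do not carry out, and it is where essentially all the work lies.
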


It was recently shown in \cite{HL} that $$\EDdeg(X_n) \leq 6n^3-15n^2+11n-4,$$ for any $n \geq 2$. 

In this paper we give a proof of Conjecture \ref{cedc}. In order to achieve this task, we first interpret the ED degree in terms of an Euler-Poincar\'e characteristic, as follows (see Theorem~\ref{cor_smooth}):
\bt\label{edec}
Suppose $X$ is a smooth closed subvariety of $\bC^n$. Then for general $\bbeta=(\beta_0,\ldots,\beta_n)\in \bC^{n+1}$ we have:
\be\label{e3i} \EDdeg(X)=(-1)^{\dim X}\chi\big(X\cap U_\bbeta\big),\ee
where $U_\bbeta:=\bC^n\setminus \{\sum_{1\leq i\leq n}(z_i-\beta_i)^2+\beta_0=0\}$. 
\et 

Given formula (\ref{e3i}) and the fact that the affine multiview variety $X_n$ is smooth, we prove Conjecture \ref{cedc} by computing the Euler-Poincar\'e characteristic $\chi\big(X_n \cap U_\bbeta\big)$ for a generic $\bbeta \in \bC^{n+1}$. This is done in this paper by regarding the affine multiview variety $X_n$ as a Zariski open subset in its closure $Y_n$ in $(\bP^2)^n$, and using additivity properties of the Euler-Poincar\'e characteristic together with a detailed study of the topology of the divisor $Y_n \setminus X_n$ ``at infinity''.

Theorem \ref{edec} is proved using Morse theory. 
We study real Morse functions  of the form $\log |f|$, where $f$ is a {nonvanishing} holomorphic Morse function on a complex manifold.
Such a Morse function has the following important properties: 
\begin{enumerate}
\item The critical points of $\log |f|$ coincide with the critical points of $f$.
\item The index of every critical point of $\log |f|$ is equal to the complex dimension of the manifold  on which $f$ is defined.
\end{enumerate}
However, as a real-valued Morse function $\log|f|$ is almost never proper. So we use here the non-proper Morse theory techniques developed by Palais-Smale \cite{PS64} (see also~\cite{LMW}). 

Alternatively one can derive 
Theorem \ref{edec} (and its generalization to singular varieties as in Theorem~\ref{csingi} below)
 by using more general results from stratified Morse theory as in \cite{STV}, \cite[Chapter 6]{Tib}, or \cite{TS}.
However, our proof of Theorem \ref{edec} is more elementary and it suffices to prove Conjecture~\ref{cedc} which motivated this work.
Moreover, it may also be of independent interest. 

In the presence of singularities, formula \eqref{e3i} is no longer true. 
Instead,
one needs to replace the Euler-Poincar\'e characteristic on the right hand side by the Euler characteristic of the local Euler obstruction function to capture the topology of singularities. 
More precisely, the results in  \cite{STV}, \cite[Chapter 6]{Tib}, or \cite{TS} can be used to prove 
 the following generalization of Theorem~\ref{edec} to the singular setting (see Theorem~\ref{csing}):

\bt\label{csingi} Let $X$ be an irreducible closed subvariety of $\C^n$. Then for a general $\bbeta\in \C^{n+1}$ we have:
\be\label{geni}
\EDdeg(X)=(-1)^{\dim X}\chi({\rm Eu}_X|_{U_{\bbeta}}),
\ee
where ${\rm Eu}_X$ is the local Euler obstruction function on $X$.
\et

With formula (\ref{geni}) one can compute the ED degree of a singular variety by understanding  the Euler characteristic of the local Euler obstruction function on $X$. 
The ED degree of singular varieties comes up in many instances in applications, the most familiar being the Eckhart-Young theorem
(see \cite[Example 2.3]{DHOST}) which is used in low-rank approximation.


 
 \medskip

The paper is organized as follows. 
In Section~\ref{npr}, we introduce the necessary material on the non-proper Morse theory of Palais-Smale and explain a holomorphic analogue of this theory. In Section~\ref{topaff}, we apply the results from Section \ref{npr} to study the topology of smooth affine varieties (Theorem~\ref{theorem_top}). As a further application, we prove Theorem~\ref{edec} on a topological formula for the Euclidean distance degree of smooth affine varieties. We also indicate here how results of \cite{STV} and \cite{TS} can be employed to  compute the Euclidean distance degree of any affine variety. Section \ref{cv} is devoted to the proof of Conjecture \ref{cedc}. 

\medskip 

\noindent{\bf Acknowledgements.} The authors thank Alex Dimca, Jan Draisma, and Lei Wu for useful discussions, and 
we are especially grateful to 
J\"org Sch\"urmann for reading earlier versions of the manuscript and making several suggestions for improvement. 
L. Maxim is partially supported by the Simons Foundation Collaboration Grant \#567077 and by the
Romanian Ministry of National Education, CNCS-UEFISCDI, grant PN-III-P4-ID-PCE-2016-0030. 
J.~I. Rodriguez is partially supported by the College of Letters and Science, UW-Madison. 
B. Wang is partially supported by the NSF grant DMS-1701305.



\section{Non-proper Morse theory}\label{npr}
Classical Morse theory (e.g., see \cite{Mi}) relates the number of critical points of a Morse function on a space with the topology of the space. In this paper, we are interested in real-valued Morse functions that are induced from holomorphic Morse functions. 

\subsection{Non-proper Morse theory and Palais-Smale conditions}
First, we recall the definition of Morse functions. 
\begin{definition}
Let $M$ be a smooth manifold. A smooth function $f: M\to \R$ is a \emph{real-valued Morse function} if all of its critical points are non-degenerate\footnote{In literature, Morse functions are often required to be proper and to have distinct critical values. In this paper, we deal with non-proper functions. Since we use Morse theory only to count the number of cells attached, we also do not require the critical values to be distinct. In fact, the critical values can be made distinct by a perturbation argument, while keeping the critical points unchanged.}.
\end{definition}

\begin{definition}
Let $M$ be a complex manifold. A holomorphic function $f: M\to \C$ is a \emph{holomorphic Morse function} if all of its critical points are non-degenerate. 
A $\C^*$-valued holomorphic function is a \emph{$\C^*$-valued holomorphic Morse function} if it is a holomorphic Morse function when regarded as a $\C$-valued function. 
\end{definition}

Let $f: M\to \R$ be a 
smooth function on a manifold $M$. For any real numbers $a<b$, we define $f^{a,b}:=f^{-1}\big([a, b]\big)$ and $f^a:=f^{-1}\big((-\infty, a]\big)$. The following  result is due to Palais-Smale\footnote{The proof was skipped in the original paper of Palais-Smale \cite{PS64}, but a proof is sketched in \cite[Theorem 3.1]{LMW}.}: 
\begin{theorem}[\cite{PS64}]\label{theorem_nonproper}
Let $M$ be a complete Riemannian manifold, and let $f: M\to \R$ be a real-valued Morse function satisfying the following condition: 
\begin{itemize}
\item[\textbf{(PS1)}]\label{condition_real} If $S$ is a subset of $M$ on which $|f|$ is bounded but on which $||\nabla f||$ is not bounded away from zero, then there exists a critical point of $f$ in the closure of $S$. 
\end{itemize}
Then the following properties hold: 
\begin{enumerate}
\item For any real numbers $a<b$, there are finitely many critical points of $f$ in $f^{a,b}$.
\item Let $a, b$ be regular values of $f$. Suppose that there are $r$ critical points of $f$ in $f^{a, b}$ having index $d_1, \ldots, d_r$, respectively. Then $f^{b}$ has the homotopy type of $f^{a}$ with $r$ cells of dimensions $d_1, \ldots, d_r$ attached. 
\item If $c$ is a regular value of $f$, then $f$ has the structure of a fiber bundle in a small neighborhood of $c$. 
\end{enumerate}
\end{theorem}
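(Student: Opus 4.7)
The plan is to adapt the classical Morse-theoretic arguments (as in Milnor) to the non-proper setting, using the Palais-Smale condition (PS1) as a substitute for properness in the step that normally guarantees completeness of the gradient flow on sublevel sets. The three conclusions separate naturally: (1) is a compactness-type statement about the critical set, (2) is the cell-attachment theorem, and (3) is a local triviality that drops out of the techniques used for (2).

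For (1), I would argue by contradiction. If $f^{a,b}$ contained an infinite set $S$ of critical points, then $|f|$ would be bounded by $\max(|a|,|b|)$ on $S$ and $\|\nabla f\|$ would vanish identically on $S$, so (PS1) would produce a critical point $p$ in $\overline{S}$. Since $f$ is Morse, $p$ is non-degenerate and hence isolated, contradicting the existence of a sequence of distinct critical points in $S$ accumulating at $p$.

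For (2) and (3), I would introduce a rescaled vector field such as $V=-\nabla f/(1+\|\nabla f\|^2)$: it is smooth, globally defined, bounded, and satisfies $\frac{d}{dt}(f\circ \varphi_t)\le 0$ along its flow $\varphi_t$. Completeness of $M$ together with boundedness of $V$ yields that $\varphi_t$ exists for all $t\ge 0$. The key lemma to establish is that if $[\alpha,\beta]\subset [a,b]$ contains no critical values, then for every $x$ with $f(x)\le \beta$ there is a finite time $t_x$ with $f(\varphi_{t_x}(x))\le \alpha$, giving a strong deformation retraction of $f^{\beta}$ onto $f^{\alpha}$. This is exactly where (PS1) enters: a trajectory trapped inside $f^{\alpha,\beta}$ for all time would have $|f|$ bounded but $\|\nabla f\|$ not bounded away from zero along it, so (PS1) would produce a critical point with critical value in $[\alpha,\beta]$, a contradiction. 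Once this retraction is in hand, (2) follows in the standard fashion: isolate the finitely many critical points in $f^{a,b}$ (using (1)), apply the local Morse lemma to attach a cell of index $d_i$ at each critical point $p_i$, and use the retraction away from these neighborhoods, as in Milnor. Conclusion (3) is immediate: pick an interval $[c-\epsilon,c+\epsilon]$ of regular values around $c$ and use the flow of a suitable normalization of $-\nabla f$ to trivialize $f$ over this interval as $f^{-1}(c)\times [c-\epsilon,c+\epsilon]$.

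The main technical obstacle I anticipate is the careful use of (PS1) in the trapping argument inside the retraction lemma: one must verify that the set traced out by a bounded trajectory meets the hypotheses of (PS1), and that the critical point extracted from its closure actually has its critical value in the forbidden interval $[\alpha,\beta]$ rather than on the boundary. A secondary subtlety is choosing the normalization of $-\nabla f$ so that its flow is simultaneously complete and fast enough to escape every compact piece of $f^{\alpha,\beta}$ in finite time; the rescaling factor must balance smoothness across critical points against enough speed to force the descent, and in practice one usually combines the rescaling above with a bump function supported away from the critical set.
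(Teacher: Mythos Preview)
The paper does not supply its own proof of this theorem: it is quoted from Palais--Smale, with a footnote pointing to a sketch in [LMW, Theorem~3.1]. So there is no in-paper argument to compare against, and your outline is the standard one that one finds in those sources.

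That said, your argument for (1) has a small gap. Applying (PS1) directly to an infinite set $S$ of critical points is vacuous: every point of $S$ is already a critical point, so the conclusion ``there exists a critical point of $f$ in $\overline{S}$'' is automatic and does not force the produced $p$ to be an \emph{accumulation} point of $S$ (think of a closed discrete set, where $\overline{S}=S$ and no accumulation occurs). The fix is to perturb: for each critical point $p_k$ choose a nearby non-critical point $x_k$ with $\|\nabla f(x_k)\|<1/k$ and $d(x_k,p_k)<1/k$ (possible by the Morse lemma), and apply (PS1) to $S'=\{x_k\}$. The resulting critical point $q\in\overline{S'}$ must then be a genuine limit of some subsequence $x_{k_j}$, hence also of $p_{k_j}$, and now the isolation of $q$ (as a non-degenerate critical point) gives the desired contradiction. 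Your sketches for (2) and (3) are the expected ones and are fine.
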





In \cite[Theorem 3.2]{LMW}, a circle-valued analogue of Theorem \ref{theorem_nonproper} was proved. In this paper, we develop holomorphic versions of the Palais-Smale condition (PS1).

\subsection{Holomorphic Palais-Smale conditions}
Let $M$ be a complex manifold with a complete Hermitian metric $h$. Let $f: M\to \C$ be a holomorphic Morse function on $M$. 
We introduce the following Palais-Smale condition of a $\C$-valued holomorphic Morse function:
\begin{itemize}
\item[\textbf{(PS2)}]\label{condition_complex1} If $S$ is a subset of $M$ on which $|f|$ is bounded but on which $||\nabla f||_h$ is not bounded away from zero, then there exists a critical point of $f$ in the closure of $S$. 
\end{itemize}

Similarly, we can define a Palais-Smale condition for a $\C^*$-valued function. Let $f: M\to \C^*$ be a holomorphic Morse function on $M$. We call the following the Palais-Smale condition of a $\C^*$-valued holomorphic Morse function:
\begin{itemize}
\item[\textbf{(PS3)}]\label{condition_complex2} If $S$ is a subset of $M$ on which $\log |f|$ is bounded but on which $||\nabla \log f||_h$ is not bounded away from zero, then there exists a critical point of $f$ in the closure of $S$. 
\end{itemize}
Here we notice that even though $\log f$ is a multivalued funtion, $d\log f=\frac{df}{f}$ is well-defined, and hence $\nabla \log f$ is well-defined.

\begin{lemma}\label{lemma_equiv1}
	Let $M$ be a complex manifold with a complete Hermitian metric $h$. Denote the associated Riemannian metric of $h$ by $g$, i.e., $g$ is the real part of $h$. Let $f: M\to \C$ be a holomorphic function. Then 
	$$||\nabla f||_h=\sqrt{2}\cdot ||\nabla f_1||_g$$
	where $f_1=\Re f$ is the real part of $f$.
\end{lemma}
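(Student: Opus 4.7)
The plan is to reduce the claimed identity to a pointwise computation in well-chosen local coordinates. Since both sides of the asserted equality are functions on $M$, it suffices to verify the identity at an arbitrary point $p$. I would first pick local holomorphic coordinates $z_1, \ldots, z_n$ centered at $p$ in which the Hermitian matrix $(h_{j\bar k}(p))$ is the identity. Writing $z_j = x_j + i y_j$, the associated Riemannian metric $g=\Re h$ then becomes the standard Euclidean metric on the underlying real tangent space at $p$, so both $\|\cdot\|_h$ and $\|\cdot\|_g$ reduce to sums of squares of coordinate components, and all computations become elementary.

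Next, I would decompose $f = f_1 + i f_2$ and interpret $\nabla f$ as the $\C$-linear extension of the Riemannian gradient, namely $\nabla f = \nabla f_1 + i \nabla f_2$, regarded as a section of the complexified tangent bundle $TM\otimes_{\R}\C$. Because $h$ is the sesquilinear extension of $g$, evaluation on this complex vector field gives
\[
\|\nabla f\|_h^2 \;=\; \|\nabla f_1\|_g^2 + \|\nabla f_2\|_g^2,
\]
so the problem reduces to showing that the two summands on the right are equal. This is where holomorphicity of $f$ enters via the Cauchy--Riemann equations $\partial f_1/\partial x_j = \partial f_2/\partial y_j$ and $\partial f_1/\partial y_j = -\partial f_2/\partial x_j$ for every $j$. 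Substituting these into the Euclidean expression for $\|\nabla f_2\|_g^2$ at $p$ converts it term by term into the expression for $\|\nabla f_1\|_g^2$, yielding $\|\nabla f\|_h^2 = 2\|\nabla f_1\|_g^2$ and therefore the desired $\sqrt{2}$ factor after taking square roots.

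There is no substantial obstacle here: the proof is a bookkeeping exercise combining a standard orthonormal-frame normalization for $h$ with the Cauchy--Riemann equations. The only point that requires some care is to fix the convention for the complex-valued gradient $\nabla f$ compatible with its appearance in the Palais--Smale condition \textbf{(PS2)}; once this convention is in place, the factor of $\sqrt{2}$ emerges directly from the equality $\|\nabla f_1\|_g = \|\nabla f_2\|_g$ forced by holomorphicity.
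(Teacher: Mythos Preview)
Your proposal is correct and follows essentially the same route as the paper: decompose $f=f_1+if_2$, use the compatibility of $h$ and $g$ to get $\|\nabla f\|_h^2=\|\nabla f_1\|_g^2+\|\nabla f_2\|_g^2$, and then invoke the Cauchy--Riemann equations to conclude $\|\nabla f_1\|_g=\|\nabla f_2\|_g$. The only cosmetic difference is that the paper works coordinate-free on the cotangent side (passing to $\|df\|_h$ and $\|df_1\|_g$ via the dual metrics), so your orthonormal-frame normalization at $p$ is an extra step that the paper's argument does not need.
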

\begin{proof} 
Notice that a Riemannian (resp., Hermitian) metric on a real (resp., complex) vector space induces a Riemannian (resp., Hermitian) metric on the dual vector space. Thus, we can also consider $h$ and $g$ as metrics on the real and complex cotangent vector bundle of $M$, respectively. Then, by the definition of gradient, we have
$$||\nabla f||_h=||df||_h \text{ and } ||\nabla f_1||_g=||df_1||_g.$$
Since $g$ is the associated Riemannian metric of $h$, we have
\begin{equation}\label{eq_HR1}
||v||_h^2=<v, v>_h=<\Re v, \Re v>_g+<\Im v, \Im v>_g=||\Re v||_g^2+||\Im v||_g^2
\end{equation}
for any complex cotangent vector $v$ at any point on $M$. Denote  $\Im f$ by  $f_2$.  By the Cauchy-Riemann equation, we have
\begin{equation}\label{eq_HR2}
||df_1||_g=||df_2||_g
\end{equation}
at every point of $M$. By equations (\ref{eq_HR1}) and (\ref{eq_HR2}), we have $||df||_h=\sqrt{2}||df_1||_g$. 
\end{proof}

\begin{lemma}\label{lemma_41}
Let $M$ be a complex manifold with a Hermitian metric $h$. 
If $f: M\to \C^*$ is a $\C^*$-valued holomorphic Morse function satisfying the Palais-Smale condition (PS3), 
then $\log |f|$ is a real-valued Morse function satisfying the Palais-Smale condition (PS1).
Moreover, $f$ and $\log|f|$ have the same critical points,
and the index of all critical points of $\log|f|$ is the complex dimension of $M$.
\end{lemma}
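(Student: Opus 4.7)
The plan is to exploit the fact that, because $f$ is nonvanishing, a holomorphic branch $g := \log f$ exists on any simply connected open subset $U\subset M$, and then $\log|f|=\Re g$ on $U$. Although $g$ itself is only locally well-defined, the differential $dg = df/f$ is a globally defined holomorphic $1$-form on $M$, and hence so is the gradient vector field $\nabla \log f$. First I would compare the critical loci: since $f(p)\neq 0$, $df(p)=0$ if and only if $dg(p)=0$, and by the Cauchy--Riemann equations applied to $g$ this is equivalent to $d(\Re g)(p)=d\log|f|(p)=0$. Thus $f$, each local branch $g$, and $\log|f|$ all share the same critical points.

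Next, at a critical point $p$, differentiating the identity $df = f\,dg$ and evaluating at $p$ gives $d^2 f(p) = f(p)\,d^2 g(p)$; since $f(p)\neq 0$, the holomorphic Hessian of $g$ at $p$ is non-degenerate iff that of $f$ is, so $g$ is itself a holomorphic Morse function near $p$. By the holomorphic Morse lemma one can choose local holomorphic coordinates $(z_1,\dots,z_n)$ centered at $p$, with $n=\dim_\C M$, in which $g = g(p) + z_1^2 + \cdots + z_n^2$. Writing $z_k = x_k + i y_k$ then gives
\[
\log|f| \;=\; \Re g(p) \,+\, \sum_{k=1}^n (x_k^2 - y_k^2),
\]
so $p$ is a non-degenerate real critical point of Morse index exactly $n$. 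This simultaneously shows that $\log|f|$ is a real-valued Morse function and computes its index at every critical point.

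To verify condition (PS1) for $\log|f|$, let $S\subset M$ be a subset on which $|\log|f||$ is bounded but $\|\nabla\log|f|\,\|_g$ is not bounded away from zero. Boundedness of $|\log|f||$ is the same as boundedness of $\log|f|$. Applying Lemma~\ref{lemma_equiv1} pointwise to a local branch $g$ of $\log f$, and noting that the resulting identity is independent of the branch since it involves only the globally defined $\nabla\log f = \nabla g$, one obtains
\[
\|\nabla\log f\|_h \;=\; \sqrt{2}\,\|\nabla\log|f|\,\|_g
\]
everywhere on $M$. Hence on $S$, $\log|f|$ is bounded while $\|\nabla\log f\|_h$ is not bounded away from zero, so (PS3) applied to $f$ produces a critical point of $f$ in $\overline{S}$, which by the first paragraph is automatically a critical point of $\log|f|$.

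The argument is essentially bookkeeping once Lemma~\ref{lemma_equiv1} and the holomorphic Morse lemma are available; the only mildly delicate point, which requires care throughout, is that $\log f$ itself need not exist globally. I would therefore phrase all statements in terms of the globally defined quantities $|f|$, $\log|f|$, $df/f$, and $\nabla\log f$, invoking a local branch $g=\log f$ only as a computational device in each local argument.
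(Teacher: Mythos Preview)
Your proof is correct and follows essentially the same approach as the paper: both use Lemma~\ref{lemma_equiv1} to pass from (PS3) to (PS1) and the holomorphic Morse lemma to compute the index. The only cosmetic difference is that the paper handles the multivaluedness of $\log f$ by passing to the infinite cyclic cover $\widetilde{M}\to M$ pulled back from $\exp:\C\to\C^*$, whereas you work directly with local holomorphic branches $g=\log f$; these are equivalent devices for the same argument.
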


\begin{proof}
	In the notations of Lemma \ref{lemma_equiv1}, it suffices to prove that for any subset $S$ of $M$, the function $||d\log f||_h$ is bounded away from zero on $S$ if and only if $\big|\big|d\log |f|\big|\big|_g$ is bounded away from zero on $S$. 

    Let $\tilde{f}: \widetilde{M}\to \C$ be the lifting of $f: M\to \C^*$ by the exponential map $\exp: \C\to \C^*$ defined by $\exp(z)=e^{z}$. Then there is a  natural infinite cyclic covering map $\pi:\widetilde{M}\to M$, which induces a Hermitian metric $\tilde{h}$ on $\widetilde{M}$. Denote the associated Riemannian metric by $\tilde{g}$. 
    Let $\widetilde{S}=\pi^{-1}(S)$. We claim that the following statements are equivalent: 
    \begin{enumerate}
		\item\label{equiv1} On $S$, the function $||d \log f||_h$ is  bounded away from zero.
		\item\label{equiv2} On $\widetilde{S}$, the function $||d \tilde{f}||_{\tilde{h}}$ is  bounded away from zero. 
		\item\label{equiv3} On $\widetilde{S}$, the function $||d \Re \tilde{f}||_{\tilde{g}}$ is  bounded away from zero.  
		\item\label{equiv4} On $S$, the function $\big|\big|d\log |f|\big|\big|_g$ is  bounded away from zero. 
	\end{enumerate}
    The equivalences $(\ref{equiv1})\iff(\ref{equiv2})$ and $(\ref{equiv3})\iff(\ref{equiv4})$ follow immediately from the construction. The equivalence $(\ref{equiv2})\iff(\ref{equiv3})$ follows from Lemma \ref{lemma_equiv1}. 
Thus, the function $\log |f|$ satisfies (PS1).
Notice that $\log |f|=\Re \log f$,
so by the Cauchy-Riemann equations the critical points of $f$ are the same as the critical points of $\log |f|$.
Moreover, by the holomorphic Morse lemma \cite[Section 2.1.2]{Voisin2002}, 
the index of each critical point of $\log |f|$ is equal to the complex dimension of $M$.
\end{proof}

\begin{lemma}\label{lemma_restriction}
Let $M$ be a complex manifold with a Hermitian metric $h$. Suppose $f: M\to \C$ is a holomorphic Morse function satisfying the $\C$-valued Palais-Smale condition (PS2). Then, for any $t\in \C$, the restriction $f|_{U_t}: U_t\to \C^*$ satisfies the $\C^*$-valued Palais-Smale condition (PS3), with $U_t:=f^{-1}(\C\setminus\{t\})$ endowed with the same metric $h$. 
\end{lemma}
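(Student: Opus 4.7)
The natural approach is to reduce to a target-shifted setting by introducing $g:=f-t$, so that $g\colon M\to\C$ is a holomorphic Morse function with the same critical points and the same differential as $f$, and $g|_{U_t}$ is genuinely $\C^*$-valued. The Palais-Smale condition (PS2) transfers verbatim from $f$ to $g$ (translation by the constant $t$ does not affect $\|\nabla(\cdot)\|_h$, and boundedness of $|g|$ is equivalent to boundedness of $|f|$). Thus it suffices to verify (PS3) for $g|_{U_t}$ and then rewrite the conclusion in terms of $f$.

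So let $S\subset U_t$ be a subset on which $\log|g|$ is bounded, say $C_1\le\log|g|\le C_2$ on $S$, and on which $\|\nabla\log g\|_h$ is not bounded away from zero. The first observation is that the level bounds on $\log|g|$ yield two-sided bounds $e^{C_1}\le|g|\le e^{C_2}$ on $S$; in particular $|f|=|g+t|$ is bounded on $S$. Next, using $\nabla\log g=\nabla g/g$, we get
\[
\|\nabla\log g\|_h \;=\; \frac{\|\nabla g\|_h}{|g|}.
\]
If the left-hand side is not bounded away from zero on $S$, then combined with $|g|\le e^{C_2}$ we extract a sequence $x_n\in S$ with $\|\nabla f(x_n)\|_h=\|\nabla g(x_n)\|_h\to 0$. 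Hence $\|\nabla f\|_h$ is not bounded away from zero on $S$ either, while $|f|$ is bounded on $S$.

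At this point (PS2) applied to $f$ on $M$ supplies a critical point $p$ of $f$ (equivalently of $g$) in the closure of $S$ taken inside $M$. The remaining point is to verify $p\in U_t$, i.e.\ $g(p)\ne 0$: by continuity of $|g|$ on $M$ and the uniform lower bound on $S$, we have $|g(p)|\ge e^{C_1}>0$, so $f(p)\ne t$ and $p\in U_t$. Since $U_t$ is open in $M$, the closure of $S$ inside $U_t$ equals the closure in $M$ intersected with $U_t$, so $p$ lies in the $U_t$-closure of $S$, verifying (PS3) for $g|_{U_t}$ (hence for $f|_{U_t}$ up to the identification $g=f-t$).

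The whole argument is essentially bookkeeping that transfers a $\C$-valued Palais-Smale condition to a $\C^*$-valued one via the identity $d\log g=dg/g$. The only non-cosmetic step, and what I expect to be the main point of care, is the final verification that the critical point produced by (PS2) does not escape into the removed fiber $f^{-1}(t)$; this is precisely where the uniform lower bound $|g|\ge e^{C_1}$ coming from boundedness of $\log|g|$ (as opposed to boundedness of $|g|$ alone, which would not suffice) is used.
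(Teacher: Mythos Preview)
Your proof is correct and follows essentially the same route as the paper's: both reduce to the identity $d\log(f-t)=\frac{df}{f-t}$ and use boundedness of $\log|f-t|$ to pass between the (PS2) and (PS3) hypotheses. The paper phrases it as a contrapositive implication $(1)+(2)+(3)\Rightarrow(4)+(5)+(6)$, while you argue directly, but the content is identical; in fact your explicit use of the lower bound $|g|\ge e^{C_1}$ to show the critical point stays in $U_t$ spells out a step the paper dismisses as ``evident'' (namely that the $U_t$-closure and the $M$-closure of $S$ agree here).
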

\begin{proof}
	The $\C^*$-valued Palais-Smale condition (PS3) for $f|_{U_t}$ states that there does not exists $S\subset U_t$ such that:
	\begin{enumerate}
		\item $\log |f-t|$ is bounded on $S$;
		\item $||d\log (f-t)||_h$ is not bounded away from zero on $S$;
		\item $f$ has no critical point in the closure of $S$ in $U_t$. 
	\end{enumerate}
    On the other hand, the holomorphic Palais-Smale condition (PS2) for $f$ states that there does not exists $S\subset M$ such that:
    \begin{enumerate}
    	\item[(4)] $|f|$ is bounded;
    	\item[(5)] $||df||_h$ is not bounded away from zero on $S$;
    	\item[(6)] $f$ has no critical point in the closure of $S$ in $M$. 
    \end{enumerate}
    Notice that $d\log (f-t)=\frac{df}{f-t}$. When $\log |f-t|$ is bounded, $||df||_h$ is not bounded away from zero on $S$ if and only if $||d\log (f-t)||_h=||\frac{df}{f-t}||_h$ is not bounded away from zero on $S$. Thus, $(1)+(2)\Rightarrow (5)$. Evidently, $(1)\Rightarrow (4)$ and $(3)\Rightarrow (6)$. Therefore, $(1)+(2)+(3)\Rightarrow (4)+(5)+(6)$, and the assertion in the theorem follows. 
\end{proof}

The following result is analogous to Theorem 4.10 of \cite{LMW}, and the proof is essentially the same. 

\begin{theorem}\label{theorem_linear}
Let $X$ be a smooth closed subvariety of $\C^n$ with the induced Hermitian metric from the Euclidean metric on $\C^n$. Then for a general linear function $l$ on $\C^n$, its restriction $l|_X$ to $X$ is a holomorphic Morse function satisfying the $\C$-valued Palais-Smale condition (PS2). 
\end{theorem}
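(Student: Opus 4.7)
The plan is to verify, for generic $c \in (\C^n)^*$, that the restriction of $l(z) = \sum_{i=1}^n c_i z_i$ to $X$ satisfies both (i) the Morse condition and (ii) the holomorphic Palais-Smale condition (PS2). The Morse part is a standard transversality exercise, whereas (PS2) is a statement about the asymptotic behavior of $l|_X$ at infinity in $X$, and it constitutes the bulk of the proof.

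For (i), note that $\nabla l \equiv \bar c$ is the constant vector field on $\C^n$, so $\nabla(l|_X)(x) = P_{T_x X}(\bar c)$ and a point $x \in X$ is critical iff $T_x X \subseteq \ker l$. Let $\gamma \colon X \to G(\dim X, \C^n)$ be the Gauss map. For $c$ outside a proper Zariski closed subset of $(\C^n)^*$, the hyperplane $\ker l \subset \C^n$ is transverse to $\gamma(X)$ in the Grassmannian sense, so $l|_X$ has only finitely many critical points and each is non-degenerate. This follows from Bertini/Sard applied to the conormal variety of $X$ in $\C^n$.

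For (ii), compactify: let $\bar X \subset \bP^n$ be the Zariski closure of $X$ under $z \mapsto [1:z_1:\cdots:z_n]$, and set $D := \bar X \setminus X \subset \{z_0 = 0\}$. Fix a complex Whitney stratification of $\bar X$ with $X$ the open stratum and $D$ a union of strata. Writing the homogenization $g(z_0,\dots,z_n) = \sum c_i z_i$, the linear form $l$ corresponds to the rational function $g/z_0$ on $\bar X$. By the stratified Bertini theorem (Kleiman), for generic $c$ the hyperplane $V(g) \subset \bP^n$ meets each stratum of $D$ transversally. Now suppose (PS2) fails for such a generic $c$: choose $\{x_k\} \subset X$ with $|l(x_k)|$ bounded, $\|\nabla(l|_X)(x_k)\|_h \to 0$, and no critical point in $\overline{\{x_k\}} \cap X$. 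A bounded subsequence in $\C^n$ would, since $X$ is closed, converge in $X$ to a critical point by continuity of $\nabla(l|_X)$, a contradiction. Hence $|x_k| \to \infty$, and after passing to a subsequence $x_k \to x_\infty \in D$; the boundedness $|l(x_k)| = |g(x_k)|/|z_0(x_k)|$ together with $z_0(x_k) \to 0$ forces $g(x_\infty) = 0$, so $x_\infty \in V(g) \cap D$. Passing to a further subsequence, $T_{x_k} X$ converges to some $T$ in $G(\dim X, \C^n)$, and the vanishing of the projected gradient forces $T \subseteq \ker l$. On the other hand, translating Whitney's condition (a) at $x_\infty$ to an affine chart near $x_\infty$ shows that $T$ must contain (a suitable representative of) the tangent space to the stratum $S_\alpha \ni x_\infty$; combined with the transversality $V(g) \pitchfork S_\alpha$ at $x_\infty$, this prevents $T \subseteq \ker l$, yielding the desired contradiction.

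The main obstacle will be precisely this last step: carefully comparing the affine Grassmannian limit $T_{x_k} X \to T \subset \C^n$ with the projective Whitney condition for $\bar X \subset \bP^n$ at $x_\infty \in D$, since the identification $T_{x_k}\C^n \cong T_{x_k}\bP^n$ depends on the affine chart and degenerates as $|x_k| \to \infty$. One must switch to a chart centered near $x_\infty$ and keep track of the rescaling of tangent vectors and of how the Euclidean metric on the original $\C^n$ appears in that chart. An alternative strategy, essentially the one the authors cite as ``the same as Theorem 4.10 of \cite{LMW},'' is to apply the real curve-selection lemma to replace the sequence by a single real-analytic arc $\gamma(t) \to x_\infty$ with $|l\circ\gamma|$ bounded and $\|\nabla(l|_X)\circ\gamma\|_h \to 0$, and then analyze its Puiseux expansion at $x_\infty$ against the generic transversality of $V(g)$ with each stratum of $D$ to reach the contradiction.
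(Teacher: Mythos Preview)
Your approach is genuinely different from the paper's. The paper never compactifies or stratifies: it works entirely with the affine conormal variety $\cZ=\{(x,\ell)\in \C^n\times(\C^n)^\vee : x\text{ is a critical point of }\ell|_X\}$, which has dimension $n$, so the second projection $p|_\cZ:\cZ\to(\C^n)^\vee$ is finite and \'etale over a Zariski-open set $U$. For $\ell\in U$, the Morse property is exactly \'etaleness of $p|_\cZ$ over $\ell$. For (PS2), the key observation is that $\|\nabla(\ell|_X)(x)\|_h$ equals the Hermitian distance from $\ell$ to the set of linear forms having a critical point at $x$; so a sequence $x_i$ with $\|\nabla(\ell|_X)(x_i)\|\to 0$ yields $(x_i,\ell_i)\in\cZ$ with $\ell_i\to\ell$, and properness of $p|_\cZ$ over a neighborhood of $\ell$ forces a subsequence of $(x_i,\ell_i)$ to converge in $\cZ$ to some $(x_0,\ell)$, giving a critical point $x_0$ in the closure. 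No boundedness of $|\ell|$ is even used.

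Your compactification-plus-Whitney route is the ``transversality at infinity'' framework of Tib\u{a}r and Siersma--Tib\u{a}r, and it is in principle valid, but the obstacle you flag is substantive rather than cosmetic. From Whitney~(a) and $V(g)\pitchfork S_\alpha$ you only get $T'\not\subseteq T_{x_\infty}V(g)$ for the \emph{projective} limit $T'$; to contradict $T\subseteq\ker l$ you still need the implication ``$T\subseteq\ker l$ in $G(d,\C^n)$ $\Rightarrow$ $T'\subseteq T_{x_\infty}V(g)$ in the projective Grassmann bundle,'' and this is exactly where the degenerating chart transition enters nontrivially (the Euclidean metric blows up like $|w_0|^{-2}$ while $dl$ acquires a $w_0^{-2}$ pole, and one must show the cancellation is controlled by $L(x_\infty)=0$). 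Your guess that the paper's argument, or Theorem~4.10 of \cite{LMW}, proceeds via curve selection is incorrect: both use the conormal-projection argument above, which sidesteps the affine/projective comparison entirely. What the paper's approach buys is a short self-contained proof with an explicit Zariski-open genericity locus; what your approach buys, once completed, is a geometric picture that extends directly to the stratified setting of Theorem~\ref{csing}.
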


\begin{proof}
Denote the complex vector space $\C^n$ by $V$, and denote its dual vector space by $V^\vee$. We define a closed subvariety $\cZ\subset V\times V^\vee$ by
	$$\cZ=\{(x, l)\in V\times V^\vee \mid x \text{ is a critical point of } l|_X \}.$$
	If we consider $V\times V^\vee$ as the cotangent space of $V$, then $\cZ$ is equal to the conormal bundle of $X$ in $V$. Now, consider the second projection $p: V\times V^\vee\to V^\vee$. Since $\dim \cZ=\dim V^\vee=n$, the restriction $p|_\cZ: \cZ\to V^\vee$ is generically finite. Therefore, there exists a nonempty Zariski open subset $U\subset V^\vee$ on which the map $p|_\cZ$ is finite and \'etale, that is, a finite (unramified) covering map (e.g., see \cite[Corollary 5.1]{Ve76}). 
	
	We will show that for any $l\in U$, its restriction $l|_X$ is a holomorphic Morse function satisfying the $\C$-valued Palais-Smale condition (PS2). So let us fix $l\in U$.
	
	First, there exists a bijection between the intersection $\cZ\cap p^{-1}(l)$ and the critical points of $l|_X$. Moreover, a critical point of $l|_X$ is non-degenerate if and only if the  intersection $\cZ\cap p^{-1}(l)$ is transverse at the corresponding point. By our construction, $p|_\cZ: \cZ\to V^\vee$ is \'etale at $l$, which means that the intersection $\cZ\cap p^{-1}(l)$ is transverse. Therefore, $l|_X$ is a holomorphic Morse function on $X$. 
	
	Next, we prove that $l|_X$ satisfies the Palais-Smale condition (PS2). Suppose $x_i\in X$, for $i=1, 2, \ldots$, is a sequence of points in $S$ such that the sequence $||\nabla (l|_X)_{x_i}||_h$ converges to zero. By \cite[Lemma 6.2]{LMW}, there exist $l_i\in V^\vee$ such that $||l_i-l||_h=||\nabla (l|_X)_{x_i}||_h$, and $(l_i)|_X$ has a critical point at $x_i$. Here the metric on $V^\vee$ is the induced Hermitian metric from the standard Euclidean Hermitian metric on $V=\C^n$. Then we have a sequence $(x_i, l_i)\in \cZ$ with $l_i$ converging to $l$ in $V^\vee$. Since the map $p|_\cZ: \cZ\to V^\vee$ is finite and \'etale near $l\in V^\vee$, there exists a subsequence of $(x_i, l_i)$ converging to a point in $V\times V^\vee$ (see \cite[Lemma 6.1]{LMW} for a precise proof of this fact). The limit point is of the form $(x_0, l)$. Since $\cZ$ is closed in $V\times V^\vee$, we have $(x_0, l)\in \cZ$. Hence, the point $x_0$ is a critical point of $l|_X$, which is in the closure of $S$.  
\end{proof}

\begin{remark}
In fact, the proof of Theorem \ref{theorem_linear} shows that $l|_X$ satisfies a stronger condition than (PS2), where we do not require that $|l|_X|$ is bounded on $S$. In other words, we have shown that if $S$ is a subset of $X$ on which $||\nabla l|_X||$ is not bounded away from zero, then there exists a critical point of $l|_X$ in the closure of $S$. Now, by Lemma \ref{lemma_equiv1}, for a general linear function $l: \C^n\to \C$, the real valued function $\Re l|_X: X\to \R$ satisfies (PS1). 
\end{remark}
\begin{remark}\label{remark_explanation}
	For a smooth affine variety $X\subset \C^n$, the holomorphic Palais-Smale condition (PS2) for $l|_X$ essentially means that fiberwise there is no critical point at  infinity. Given a linear map $l: \C^n\to \C$, we can compactify fiberwise and obtain a proper map $\overline{l}: \mathbb{P}^{n-1}\times \C\to \C$, extending $l$. Let $\overline{X}$ be the closure of $X$ in $\mathbb{P}^{n-1}\times \C$. Suppose $\overline{X}$ is also smooth. Then $l|_X$ satisfies (PS2) if and only if the function $l_{\overline{X}}: \overline{X}\to \C$ has no critical point in the boundary $\overline{X}\setminus X$. 
\end{remark}

\begin{remark}
The proof of Theorem~\ref{theorem_linear} shows that the number of critical points of $l|_{X}$ is the degree of the projection $p|_\cZ: \cZ\to V^\vee$.
\end{remark}


\section{Topology of affine varieties and the Euclidean distance degree}\label{topaff}
In this section, we apply the results from Section~\ref{npr} to study the topology of smooth affine varieties. As a further application, we derive a topological formula for the Euclidean distance degree of smooth affine varieties. Moreover, by using results of \cite{STV,TS}, we also show how to compute the Euclidean distance degree of a possibly singular affine variety in terms of the local Euler obstruction function.

\subsection{Euler-Poincar\'e characteristic and the number of critical points}
 
\begin{theorem}\label{theorem_top}
Let $X$ be a smooth closed subvariety of dimension $d$ in $\C^n$. Let 
$l: \C^n\to \C$ be a general linear function, and let $H_c$ be the hyperplane in $\C^n$ defined by the equation $l=c$ for a general $c\in \C$. Then:
\begin{enumerate}
	\item $X$ is homotopy equivalent to $X\cap H_c$ with finitely many $d$-cells attached;
	\item the number of $d$-cells is equal to the number of critical points of $l|_X$;
	\item the number of critical points of $l|_X$ is equal to $(-1)^{d}\chi(X\setminus H_c)$. 
\end{enumerate}
\end{theorem}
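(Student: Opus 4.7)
The plan is to deduce all three statements from the non-proper Morse theory developed in Section~\ref{npr}. By Theorem~\ref{theorem_linear}, for a general linear function $l$, the restriction $l|_X$ is a holomorphic Morse function satisfying condition (PS2). I would then choose $c$ generic so that $H_c=\{l=c\}$ meets $X$ transversely (hence $X\cap H_c$ is smooth of dimension $d-1$) and $c$ is not a critical value of $l|_X$; in particular the $N$ critical points of $l|_X$ all lie in $X\setminus H_c$. By Lemma~\ref{lemma_restriction}, the shifted function $(l-c)|_{X\setminus H_c}\colon X\setminus H_c\to\C^*$ satisfies (PS3), and Lemma~\ref{lemma_41} then promotes $F:=\log|l-c|$ to a real-valued Morse function on $X\setminus H_c$ satisfying (PS1), whose critical points coincide with those of $l|_X$ and whose Morse index at each is exactly $d$.

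Next, I would feed $F$ into Theorem~\ref{theorem_nonproper} and track the topology of sublevel sets. For $a\ll 0$, the sublevel set $F^a$ is a punctured tubular neighborhood of $X\cap H_c$ in $X$; by transversality and the holomorphic tubular neighborhood theorem, $F^a$ has the homotopy type of the unit circle bundle $S(\nu)$ of the complex normal line bundle $\nu$ of $X\cap H_c$ in $X$. As $b$ crosses each of the $N$ critical values, one $d$-cell is attached, so for $b$ above all critical values, $F^b$ has the homotopy type of $S(\nu)$ with $N$ $d$-cells glued on. Since no critical values exceed such $b$, Theorem~\ref{theorem_nonproper}(3) yields that the inclusions $F^b\hookrightarrow F^{b'}$ are homotopy equivalences, and hence $X\setminus H_c$ itself is homotopy equivalent to $F^b$.

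To prove (1) and (2), I would pass from the homotopy type of $X\setminus H_c$ to that of $X$ via the homotopy pushout coming from the open cover $X=(X\setminus H_c)\cup T$, where $T\subset X$ is an open tubular neighborhood of $X\cap H_c$. Here $T$ retracts onto $X\cap H_c$ and $T\setminus H_c$ retracts onto $S(\nu)$. Attaching the contractible disk bundle to $F^b$ along $S(\nu)$ collapses the circle bundle onto its base $X\cap H_c$, leaving $X\cap H_c$ with $N$ $d$-cells attached; this gives (1) and (2). Statement (3) is then immediate from additivity of the Euler characteristic for complex algebraic varieties:
\[
\chi(X)=\chi(X\cap H_c)+\chi(X\setminus H_c)=\chi(X\cap H_c)+(-1)^d N,
\]
whence $N=(-1)^d\chi(X\setminus H_c)$.

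The main obstacle I anticipate is the gluing step that transitions from the sublevel set $F^b\subset X\setminus H_c$ to the topology of $X$ itself. Because $F\to -\infty$ along $X\cap H_c$, the tower of sublevel sets of $F$ exhausts only $X\setminus H_c$, not $X$, and one must argue carefully that filling in the disk-bundle neighborhood of $X\cap H_c$ converts the asymptotic circle-bundle stratum produced by Morse theory into its base $X\cap H_c$, so that the $N$ $d$-cells contributed by the critical points appear as attached directly to $X\cap H_c$. Equivalently, one could work throughout on $X$ with the Morse--Bott function $|l-c|^2$, whose minimum locus is the critical submanifold $X\cap H_c$ and whose isolated index-$d$ critical points are exactly those of $l|_X$; but this requires separately establishing a Morse--Bott version of (PS1), which ultimately reduces to the same analysis.
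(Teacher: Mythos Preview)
Your proof is correct and rests on the same Morse-theoretic core as the paper, but the paper avoids precisely the gluing step you flag as the main obstacle. Rather than tracking sublevel sets $F^a\subset U_c$ (preimages of \emph{punctured} disks under $l|_X$) and then re-attaching a neighborhood of $X\cap H_c$ via a homotopy pushout, the paper works throughout with the preimages $l|_X^{-1}\big(B_r(c)\big)\subset X$ of \emph{full} closed disks. These differ from your $F^{\log r}$ only by the central fiber $X\cap H_c$, and since the $d$-cells supplied by Theorem~\ref{theorem_nonproper} are attached away from that fiber, the cell-attachment statement passes unchanged to the full-disk preimages. Now the small one $l|_X^{-1}\big(B_\epsilon(c)\big)$ retracts directly onto $X\cap H_c$ (it is a fiber bundle over a disk, $c$ lying outside the finite bifurcation set of $l|_X$), while the large one $l|_X^{-1}\big(B_{1/\epsilon}(c)\big)$ is homotopy equivalent to $X$ once the bifurcation set is contained in $B_{1/\epsilon}(c)$. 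Statements (1) and (2) then follow with no circle bundle and no pushout; your route through $S(\nu)$ and the open cover $X=(X\setminus H_c)\cup T$ is valid---and since $l-c$ trivializes $\nu$, the pushout is in fact easy---but it is a detour the paper sidesteps. For (3) the two arguments coincide.
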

\begin{proof} Let $\mathcal{B}_X$ denote the bifurcation set for the  function $l|_X:X \to \C$. It is well-known that $\mathcal{B}_X$ is finite and that $l|_X$ is a smooth fiber bundle over $\C \setminus \mathcal{B}_X$. Let $c \in \C \setminus \mathcal{B}_X$, and denote by  
 $B_\epsilon(c)$ the disc in $\C$ centered at $c$ with radius $\epsilon>0$.  
	Since $c$ is not a bifurcation point, the map $l|_X: X\to \C$ is a fiber bundle near $c\in \C$. Thus, for $\epsilon$ sufficiently small, $l|_X^{-1}(c)$ is a deformation retract of $l|_X^{-1}\big(B_\epsilon(c)\big)$. 
	
	By Theorem \ref{theorem_linear} and Lemma \ref{lemma_restriction}, the function $l|_{U_c}: U_c\to \C^*$ is a $\C^*$-valued holomorphic Morse function satisfying the Palais-Smale condition (PS3), where $U_c:=X\setminus H_c$ is endowed with the Euclidean Hermitian metric induced from $\C^n$. By Lemma \ref{lemma_41}, the function $\log \big|l|_{U_c}\big|: U_c\to \R$ is a real-valued Morse function satisfying the condition (PS1). 
	Moreover, the holomorphic function $l|_{U_c}$ and the real-valued function $\log \big|l|_{U_c}\big|$ have the same critical points and the index of all critical points of $\log \big|l|_{U_c}\big|$ is exactly $d$.
	%
%
	  It then follows from Theorem \ref{theorem_nonproper} that, for $\epsilon>0$ sufficiently small, $l|_X^{-1}\big(B_{\frac{1}{\epsilon}}(c)\big)$ is homotopy equivalent to $l|_X^{-1}\big(B_\epsilon(c)\big)$ with finitely many $d$-cells attached. Moreover, the number of $d$-cells is equal to the number of critical points of $l|_{U_c}$, which is equal to the number of critical points of $l|_X$. For $\epsilon>0$ sufficiently small so that $\mathcal{B}_X \subset B_{\frac{1}{\epsilon}}(c)$, we have that $l|_X^{-1}\big(B_{\frac{1}{\epsilon}}(c)\big)$ is homotopy equivalent to $X$. 
	 Thus, the first two assertions follow from Theorem \ref{theorem_nonproper}. 
	
	By the additivity of the Euler characteristics for complex algebraic varieties, we have 
	$$\chi(X\setminus H_c)=\chi(X)-\chi(X\cap H_c)=(-1)^{d}\cdot (\text{the number of critical points of } l|_X),$$
	where the second equality follows from assertions (1) and (2).
\end{proof}

\begin{remark}\label{fibr} The above proof also shows that, for general $c \in \C$, the complement $U_c=X\setminus H_c$ of a generic hyperplane section of $X$ is obtained (up to homotopy) from a fiber bundle over $S^1\simeq B_\epsilon(c) \setminus \{c\}$ with fiber homotopy equivalent to a finite CW-complex, by attaching finitely many $d$-cells. In particular, it follows as in \cite[Section 2]{LMW} that $U_c$ satisfies a weak form of generic vanishing for rank-one local systems, and the number of $d$-cells (hence also the number of critical points of $l|_X$) equals (up to a sign) the middle Novikov (or $L^2$) Betti number of $U_c$ corresponding to the homomorphism $l_*:\pi_1(U_c)\to \Z$ induced by $l$.
\end{remark}

\begin{remark}
The assertion (1) of Theorem~\ref{theorem_top} is a special case of the {\it affine Lefschetz theorem}, see \cite[Theorem 5]{Hamm}.	
\end{remark}


\subsection{Examples}
When $V=\C^n$ and $V\times V^\vee$ is the cotangent space of $V$, 
the \emph{conormal  variety} of a smooth codimension $c$ variety $X$ in $V$ has defining equations given by the following. 
Let $(z_1,\dots,z_n)$ denote the coordinates of $V$, $(u_1,\dots,u_n$) denote the coordinates of $V^\vee$, and let $\{f_1,\dots,f_k\}$ denote a set of polynomials generating the ideal of $X$. 
If $\Jac (f_1,\dots,f_k)$ denotes the $k\times n$ matrix of partial derivatives where the $(i,j)$th entry is given by $\partial f_i/\partial z_j$, then the ideal of the conormal variety is generated by the sum of 
$\langle  f_1,\dots,f_k\rangle$ and the $(c+1)\times (c+1)$ minors of  the $(k+1)\times n$ matrix
$\begin{bmatrix}u_1,\dots,u_n\\  
\Jac (f_1,\dots,f_k)
\end{bmatrix}.
$
Readers with an optimization background
can compare these defining equations to those of the conormal variety of a projective variety presented in Chapter 5 of \cite[p.~215]{BPT2013}. 

\bex
	Let $X\subset V=\C^2$ be the zero locus of $f(x, y)=y-x^2$. Let $u, v$ be the dual coordinates in $V^\vee$. 
	
	The conormal variety $\cZ\subset V\times V^\vee$ is defined by equations 
	$$y=x^2, \text{ and } u\frac{\partial f}{\partial y}-v\frac{\partial f}{\partial x}=0. $$
	In this case, the second projection $p|_\cZ: \cZ\to V^\vee$ is a birational map. It is finite and \'etale, i.e., an isomorphism in this case, over $U=\{v\neq 0\}$. Therefore, for any linear function of the form $l=\alpha x+ y$, the function $l|_X$ is a holomorphic Morse function satisfying (PS2). 
	
	When $l=\alpha x+ y$, the function $l|_X$ has one critical point. On the other hand, since $X\cong \C$ and $X$ has degree two, we have 
	$$\chi(X\setminus H_c)=\chi(X)-\chi(X\cap H_c)=1-2=-1.$$
	
	When $l=x$, the function $l|_X$ happens to also satisfy (PS2). This is explained in Remark \ref{remark_explanation}. 
\eex

\bex
     Let $X\subset V=\C^2$ be defined by the equation $x(x+1)y=1$. Let $u, v$ be the dual coordinates in $V^\vee$. Then the conormal variety $\cZ\subset V\times V^\vee$ is defined by equations
     $$x(x+1)y=1, \text{ and } (2x+1)yv-x(x+1)u=0.$$
     The second projection $p|_\cZ: \cZ\to V^\vee$ is of degree four. The map is finite\footnote{The computation of where the map is finite but not \'etale is a bit complicated and not important for our purpose. } over $U'=\{v\neq 0\}$. 
     
     For a general linear function $l=\alpha x+\beta y$, the restriction $l|_X$ is equal to $\alpha x+\frac{\beta}{x(x+1)}$ on $X$. Since $x$ is a coordinate function of $X$ with $x\neq 0, 1$, we know that $l|_X$ has four critical points. On the other hand, $X$ is a curve of degree three, which is isomorphic to $\C\setminus \{0, 1\}$. Thus, $\chi(X\setminus H_c)=\chi(X)-\chi(X\cap H_c)=-1-3=-4$. 
     
     The linear function $l=x$ has no critical point on $X$. The intersection $X\cap H_c$ consists of one or zero points depending on whether $c(c-1)=0$. Thus, 
     $$\chi(X\setminus H_c)=\chi(X)-\chi(X\cap H_c)=-1 \text{ or }-2.$$
     The number of critical points of $l|_X$ is not equal to $(-1)^{\dim X}\chi(X\setminus H_c)$ in either case. We can also see that (PS2) fails in this case, because as $x\to 0$ and $y\to \infty$ in $X$, the function $l|_X$ is bounded and the norm $||dl|_X||\to 0$. 
\eex

\bex
	Let $X$ be a curve in $\C^2$ defined by a general degree $d$ polynomial $f(x, y)=0$. In particular, $X$ is smooth. For a general linear function $l=\alpha x+\beta y$ on $\C^2$, then the critical points of $l|_X$ are defined by 
	$$f(x, y)=0, \text{ and } \alpha \frac{\partial f}{\partial y}-\beta \frac{\partial f}{\partial x}=0.$$
	By Bezout's theorem, $l|_X$ has $d(d-1)$ critical points. 
	
	On the other hand, the compactification $\overline{X}$ is a smooth curve of genus $g=\frac{(d-1)(d-2)}{2}$. Thus, $\chi(X)=(2-2g)-d=2d-d^2$, and $\chi(X\setminus H_c)=\chi(X)-\chi(X\cap H_c)=d-d^2$. 
	
\eex

\bex
	Let $X$ be a smooth closed subvariety of $\C^n$ whose defining equation does not involve the last coordinate of $\C^n$. Then any linear function $l$ on $\C^n$ involving the last coordinate has no critical point on $X$. On the other hand, the projection on the first $n-1$ coordinates induces a $\C^*$-bundle structure on $X\setminus H_c$. Thus, $\chi(X\setminus H_c)=0$. 
\eex


\subsection{Euclidean distance degree}
In this section we give a topological interpretation of the Euclidean distance degree of an affine variety in terms of an Euler characteristic invariant. (Other such interpretations can be derived in the smooth setting by using Remark \ref{fibr}.) We begin with the following application of Theorem~\ref{theorem_top}:
\begin{theorem}\label{cor_smooth}
	Let $X$ be a smooth closed subvariety of $\C^n$, and let $z_1, \ldots, z_n$ be the coordinates of $\C^n$. For a general $\bbeta=(\beta_0, \ldots, \beta_{n})\in \C^{n+1}$, let $U_{\boldsymbol{\beta}}$ denote the complement of the hypersurface $\sum_{1\leq i\leq n}(z_i-\beta_i)^2+\beta_{0}=0$ in $\C^n$. Then 
	\begin{equation}
	\EDdeg(X)=(-1)^{\dim X}\chi(X\cap U_{\bbeta}). 
	\end{equation}
\end{theorem}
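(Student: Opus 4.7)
The plan is to imitate the proof of Theorem \ref{theorem_top}, replacing the generic linear function $l$ on $\C^n$ by the quadratic function
$$g(z) = g_\bbeta(z) := \sum_{i=1}^n (z_i - \beta_i)^2 + \beta_0,$$
and taking the constant ``$c$'' to be $0$. Writing $\balpha := (\beta_1, \ldots, \beta_n)$, one has $g|_X = f_\balpha|_X + \beta_0$, so the critical points of $g|_X$ coincide with those of $f_\balpha|_X$. By the very definition of $\EDdeg(X)$, for generic $\balpha \in \C^n$ the function $f_\balpha|_X$ has exactly $\EDdeg(X)$ critical points, all non-degenerate. Since varying $\beta_0$ translates both the critical values and the bifurcation set of $g|_X$, for generic $\bbeta \in \C^{n+1}$ one further has that $0$ is neither a critical value nor a bifurcation value of $g|_X$; in particular, all critical points of $g|_X$ lie in $X \cap U_\bbeta$, and $g^{-1}(0) \cap X = X \setminus U_\bbeta$ is a smooth generic fiber of $g|_X$.

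The main technical step, which I expect to be the principal obstacle, is to verify that $g|_X : X \to \C$ satisfies the holomorphic Palais-Smale condition (PS2) for generic $\bbeta$. I would prove this by a direct analog of Theorem \ref{theorem_linear}, in which the conormal variety is replaced by the \emph{Euclidean distance normal bundle correspondence}
$$\mathcal{E}_X := \{(z, \balpha) \in X \times \C^n \mid z - \balpha \in N_z X\},$$
where $N_z X$ denotes the complex normal space to $X$ at $z$. The map $(z, v) \mapsto (z, z - v)$ identifies $\mathcal{E}_X$ with the total space of the normal bundle of $X$ in $\C^n$, so $\dim \mathcal{E}_X = n$, and the second projection $p : \mathcal{E}_X \to \C^n$ is generically finite of degree $\EDdeg(X)$. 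Hence there is a nonempty Zariski-open $U' \subset \C^n$ over which $p$ is finite and \'etale. For $\balpha \in U'$ and any sequence $z^{(i)} \in X$ with $\|\nabla(g|_X)(z^{(i)})\|_h \to 0$, the orthogonal projection of $z^{(i)} - \balpha$ onto $T_{z^{(i)}} X$ tends to zero in norm; setting $\balpha^{(i)} := \balpha + (z^{(i)} - \balpha)_{T_{z^{(i)}} X}$ produces a sequence $\balpha^{(i)} \to \balpha$ with $(z^{(i)}, \balpha^{(i)}) \in \mathcal{E}_X$. The finite \'etaleness of $p$ near $\balpha$ then extracts a convergent subsequence of $(z^{(i)}, \balpha^{(i)})$, whose limit is a critical point of $f_\balpha|_X = g|_X$ in the closure of the original sequence; this is exactly (PS2). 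The parameter $\beta_0$ is irrelevant throughout, since adding a constant affects neither the gradient nor the critical points.

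With (PS2) in hand, Lemma \ref{lemma_restriction} (applied with $t=0$) yields (PS3) for the restriction $g|_{X \cap U_\bbeta} : X \cap U_\bbeta \to \C^*$, and by Lemma \ref{lemma_41} the real-valued function $\log|g|$ on $X \cap U_\bbeta$ is then a Morse function satisfying (PS1), whose critical points coincide with those of $g|_X$ (all lying in $X \cap U_\bbeta$) and each has Morse index $d := \dim X$. The proof of Theorem \ref{theorem_top} now carries over verbatim with $l$ replaced by $g$ and $c$ by $0$: by Theorem \ref{theorem_nonproper}, for sufficiently small $\epsilon > 0$ one obtains that $g|_X^{-1}(B_{1/\epsilon}(0)) \simeq X$ is homotopy equivalent to $g|_X^{-1}(B_\epsilon(0)) \simeq g^{-1}(0) \cap X = X \setminus U_\bbeta$ with exactly $\EDdeg(X)$ cells of real dimension $d$ attached. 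By additivity of the Euler characteristic for complex algebraic varieties,
$$\chi(X) = \chi(X \setminus U_\bbeta) + (-1)^d\, \EDdeg(X) \quad\text{and}\quad \chi(X) = \chi(X \cap U_\bbeta) + \chi(X \setminus U_\bbeta),$$
from which the desired identity $\EDdeg(X) = (-1)^d \chi(X \cap U_\bbeta)$ follows immediately.
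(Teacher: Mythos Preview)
Your approach is sound and would succeed, but it takes a genuinely different route from the paper. The paper avoids reproving (PS2) for the quadratic $g_\bbeta$ altogether by an embedding trick: it sends $X$ into $\C^{n+1}$ via $i(z_1,\ldots,z_n)=(z_1^2+\cdots+z_n^2,\,z_1,\ldots,z_n)$ and observes that $g_\bbeta = i^*l$ for the \emph{linear} function $l(w)=w_0-2\sum\beta_i w_i+(\sum\beta_i^2+\beta_0)$ on $\C^{n+1}$. Theorem~\ref{theorem_top}(3) then applies directly to $i(X)$ with this $l$ (which is general for generic $\bbeta$), and the result follows in two lines. Your direct argument has the conceptual virtue of identifying the ED correspondence $\mathcal{E}_X$ as the exact analogue of the conormal variety $\cZ$, but it forces you to rerun the \'etale-covering argument of Theorem~\ref{theorem_linear}; the paper's linearization trick is precisely what makes its proof so short.

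One technical slip in your (PS2) step is worth flagging. The hypothesis $\|\nabla(g|_X)(z)\|_h\to 0$ controls the Hermitian norm of the $\C$-linear functional $v\mapsto 2\sum_j(z_j-\alpha_j)v_j$ on $T_zX$, which equals $2\|P_{T_zX}(\overline{z-\balpha})\|$, not $2\|P_{T_zX}(z-\balpha)\|$; and membership in $\mathcal{E}_X$ requires $z-\balpha'$ to lie in the \emph{bilinear} annihilator of $T_zX$, which is the complex conjugate of its Hermitian orthogonal complement. Consequently your formula $\balpha^{(i)}=\balpha+(z^{(i)}-\balpha)_{T_{z^{(i)}}X}$ does not in general land in $\mathcal{E}_X$. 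The fix is a single conjugation (take $\balpha^{(i)}=\balpha+\overline{P_{T_{z^{(i)}}X}(\overline{z^{(i)}-\balpha})}$), or more cleanly just invoke \cite[Lemma~6.2]{LMW} applied to the covector $dg_{z^{(i)}}$ exactly as in the proof of Theorem~\ref{theorem_linear}; after this adjustment the rest of your argument goes through unchanged.
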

\begin{proof}
	Consider the closed embedding 
	$$i: \C^n\hookrightarrow \C^{n+1};\quad (z_1, \ldots, z_n)\mapsto ( z_1^2+\cdots+z_n^2, z_1, \ldots, z_n).$$
	Let $w_0, \ldots, w_{n}$ be the coordinates of $\C^{n+1}$. Notice that function $\sum_{1\leq i\leq n}(z_i-\beta_i)^2+\beta_{0}$ on $\C^n$ is equal to the pullback of the function
	\be\label{geneq} w_{0}+\sum_{1\leq i\leq n}-2\beta_i w_i+ \sum_{1\leq i\leq n}\beta_i^2+\beta_{0}\ee
	on $\C^{n+1}$. 
	The theorem follows by applying Theorem~\ref{theorem_top}(3) to the smooth affine variety $i(X)\subset \C^{n+1}$. 
\end{proof}

\begin{remark}
	Notice that we translate the squared Euclidean distance function on $\C^n$ by $\beta_0 \in \C$ to ensure that (\ref{geneq}) is a generic affine linear function on $\C^{n+1}$.
\end{remark}

Let us next show how to compute the ED degree of a  singular affine variety.
We first remark that Theorem~\ref{theorem_top} is a special case of the following result. 

\begin{theorem}\cite[Equation~(2)]{STV}\label{theorem_top_singular}
Let $X$ be an irreducible  closed subvariety in $\C^n$. Let $l: \C^n\to \C$ be a general linear function, and let $H_c$ be the hyperplane in $\C^n$ defined by the equation $l=c$ for a general $c\in \C$. Then
 the number of critical points of $l|_{X_{\reg}}$ is equal to $(-1)^{\dim X}\chi({\rm Eu}_{X}|_{U_c})$, where $U_c=X\setminus H_{c}$ and ${\rm Eu}_X$ is the local Euler obstruction function on $X$.
\end{theorem}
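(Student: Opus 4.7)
The proof extends the strategy of Theorem~\ref{theorem_top}(3) to the singular setting by replacing ordinary Morse theory on a smooth affine variety with stratified Morse theory and the microlocal index formula for constructible functions. Fix a complex Whitney stratification $\mathcal{S}$ of $X$ whose open stratum is $X_{\reg}$; then $\mathrm{Eu}_X$ is $\mathcal{S}$-constructible with $\mathrm{Eu}_X|_{X_{\reg}}\equiv 1$, and by additivity of the Euler characteristic of constructible functions,
$$\chi(\mathrm{Eu}_X|_{U_c})=\chi(X_{\reg}\cap U_c)+\sum_{S\ne X_{\reg}}\mathrm{Eu}_X(S)\,\chi(S\cap U_c).$$
The objective is to show that the right-hand side equals $(-1)^{\dim X}$ times the number of critical points of $l|_{X_{\reg}}$.

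I would first make sufficiently generic choices of $l\in(\mathbb{C}^n)^\vee$ and $c\in\mathbb{C}$ so that: (i) $H_c$ is transverse to every stratum of $\mathcal{S}$; (ii) each restriction $l|_S$ is a holomorphic Morse function; (iii) $l|_{X_{\reg}}$ satisfies the Palais--Smale condition (PS2), which is Theorem~\ref{theorem_linear} (extended to the smooth but non-closed locus $X_{\reg}$ by applying the conormal argument to a resolution or to the smooth part of a generic linear slice); and (iv) the graph $\Gamma_{dl}\subset T^*\mathbb{C}^n$ meets the conormal cycle $T^*_X\mathbb{C}^n$ only over points of $X_{\reg}$, and transversally. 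Such $l$ and $c$ exist because the bad loci lie in proper algebraic subsets of the parameter spaces $(\mathbb{C}^n)^\vee$ and $\mathbb{C}$.

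The central tool is the Kashiwara--Dubson--MacPherson index theorem for the local Euler obstruction, which identifies the characteristic cycle $CC(\mathrm{Eu}_X)$ with the conormal cycle $[T^*_X\mathbb{C}^n]$ and provides a Morse-type index formula computing $\chi(\mathrm{Eu}_X|_U)$ on suitable Zariski open subsets $U\subset X$ via intersection with a generic cotangent section. Applying this with the section $dl$ on $U=U_c$ and using clause (iv), each intersection point contributes $(-1)^{\dim X}$, giving
$$\chi(\mathrm{Eu}_X|_{U_c})=(-1)^{\dim X}\cdot\#\bigl\{\Gamma_{dl}\cap T^*_X\mathbb{C}^n\text{ over }U_c\bigr\}=(-1)^{\dim X}\cdot\#\{\text{critical points of }l|_{X_{\reg}}\},$$
the last equality because a critical point of $l|_{X_{\reg}}$ is exactly a point of $X_{\reg}$ whose cotangent fiber in $T^*_X\mathbb{C}^n$ contains $dl$, and all critical points lie in $U_c$ by the generic choice of $c$.

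The main obstacle is the non-properness in the index formula: the intersection number $\Gamma_{dl}\cdot[T^*_X\mathbb{C}^n]$ over the open subset $U_c$ must genuinely compute $\chi(\mathrm{Eu}_X|_{U_c})$ without acquiring additional boundary contributions from $X\cap H_c$ or from infinity in $X$. This is the stratified analogue of the non-proper Morse theory developed in Section~\ref{npr}, and is exactly where one invokes \cite{STV}, \cite[Chapter 6]{Tib}, or \cite{TS}: their stratified Morse-at-infinity results provide the vanishing of such boundary contributions under generic choice of $l$ and $c$, closing the argument. The affine analog of Palais--Smale for each stratum, together with additivity across strata, ensures that no critical points at infinity contaminate the count.
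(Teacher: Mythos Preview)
The paper does not supply its own proof of this theorem; it is stated as a citation to \cite[Equation~(2)]{STV}, with the remark that it can also be derived from \cite[Theorem~1.2]{TS} by taking $\alpha=(-1)^{\dim X}\mathrm{Eu}_X$ and $k=0$. So there is no paper proof to compare against in the usual sense.

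Your proposal is a reasonable outline of how the cited results work: the identification $CC(\mathrm{Eu}_X)=(-1)^{\dim X}[\overline{T^*_{X_{\reg}}\C^n}]$ and the microlocal index formula reduce $\chi(\mathrm{Eu}_X|_{U_c})$ to a count of intersection points of $\Gamma_{dl}$ with the conormal variety, which are exactly the critical points of $l|_{X_{\reg}}$. At the one genuinely hard step --- ruling out contributions from infinity and from the boundary $X\cap H_c$ --- you yourself defer to \cite{STV}, \cite{Tib}, \cite{TS}. So your treatment and the paper's are aligned: both take the result from the literature, with your sketch providing the conceptual roadmap.

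One caveat: your clause (iii), extending (PS2) from Theorem~\ref{theorem_linear} to the non-closed locus $X_{\reg}$, is not well-founded as stated. Theorem~\ref{theorem_linear} is proved for a \emph{closed} smooth subvariety, and on $X_{\reg}$ a sequence of near-critical points can escape to $X_{\sing}$, where the finite-\'etale conormal argument breaks down. Fortunately this clause plays no role in your main line, which proceeds via the characteristic cycle and the index theorem rather than via Palais--Smale on $X_{\reg}$; you should simply drop it.
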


Theorem~\ref{theorem_top_singular}
 can also be derived from  \cite[Theorem 1.2]{TS}, by letting
 $\alpha:=(-1)^{\dim X} {\rm Eu}_X$ and  $k=0$  in their formula (3)
 \footnote{We are grateful to J. Sch\"urmann for bringing the references \cite{STV}, \cite{Tib} and \cite{TS} to our attention.}.
  In fact, the paper \cite{TS} deals with a variant of non-proper Morse theory for complex affine varieties, which uses projective compactifications and transversality at infinity to conclude that there are no singularities at infinity in the context of stratified Morse theory (see also \cite[Chapter 6]{Tib}). 

Furthermore, Theorem~\ref{theorem_top_singular} can be used as in the proof of Theorem~\ref{cor_smooth} to compute the ED degree of a (possibly) singular affine variety as follows.

\bt\label{csing}
Let $X$ be an irreducible closed subvariety of $\C^n$. Then for a general $\bbeta\in \C^{n+1}$	 we have:
\be\label{gen}
\EDdeg(X)=(-1)^{\dim X}\chi({\rm Eu}_X|_{U_{\bbeta}}).
\ee
\et

\begin{remark}
If $X$ is smooth, one has the identity ${\rm Eu}_X=1_X$, so Theorem \ref{cor_smooth} is indeed a special case of Theorem \ref{csing}. 	
\end{remark}


\section{Application to computer vision}\label{cv} 
In this section, we use Theorem~\ref{cor_smooth} to determine the ED degree of a variety coming from computer vision.



\newcommand{\pointP}{y}
We consider a \emph{camera} as a $3\times 4$ matrix of full rank
that defines a linear map from $\pp^3$ to $\pp^2$
sending a point 
$\pointP\in \pp^3$
to its image $A\cdot \pointP\in\pp^2$.
This map is well-defined everywhere except at the kernel of
$A$. 
This kernel corresponds to a point $P_i$ in $\pp^3$ that is called the \emph{center of the camera}.
The \emph{multiview variety} $Y_n$
associated to $n$ cameras
$A_1,A_2,\dots,A_n$ is the closure of the
image of the map
\[
\pp^3\dashrightarrow (\pp^2)^n,\quad \pointP\mapsto(A_1 \cdot \pointP, \dots, A_n \cdot \pointP).
\]
Consider a general affine chart $\cc^{2n}$ of $(\pp^2)^n$.
This amounts to choosing a general affine chart $\cc^2$ of each $\pp^2$.
Define the \emph{affine multiview variety} $X_n$ to be the restriction of $Y_n$ to this chart, i.e., $X_n=Y_n\cap \cc^{2n}$. 
\begin{problem}
If each camera $A_i$ is given by a $3\times 4$ matrix of real numbers, 
then
the data 
\[ 
\balpha=(v_1,w_1,v_2,w_2,\dots,v_n,w_n)\in\rr^{2n}
\]
 represents  
$n$ noisy images $(v_i,w_i)$ of a
point in $\rr^3$
taken by the
$n$ cameras.
The problem of minimizing the squared Euclidean distance function $f_\balpha$ on $X_n$
is called \emph{$n$-view triangulation} in the computer vision literature.
\end{problem}

The paper \cite{DHOST} was motivated to study the ED degree of $X_n$ and solve $n$-view triangulation (see also
\cite{AST2013,HARTLEY1997146,SSN}). 
Our main application of Theorem \ref{cor_smooth} gives a closed form expression for the ED degree of $X_n$
when the cameras are in general position:
\begin{equation}\label{equ_vision}
\EDdeg(X_n)=\frac{9}{2}n^3-\frac{21}{2}n^2+8n-4.
\end{equation} 
This formula was conjecture in \cite[Example 3.3]{DHOST} and it agrees with the computations done in \cite{SSN} with $n=2,3,\dots,7$.

The center of a camera $P\in \bP^3$  defines a natural map $F_P: \Bl_P\bP^3\to \bP^2$ 
where $\Bl_P\bP^3$ is the blowup of $\bP^3$ at $P$. 
Therefore, we have a proper map $F:\Bl_{P_1,\ldots,P_n}\pp^3\to (\pp^2)^n$. 
For the remainder of this section we will assume that $n\geq 3$ and that the cameras are in general position.  
Notice that in this case $F$ is a closed embedding. 
By definition, $Y_n$ is equal to the image of $F$, and hence $Y_n$ is isomorphic to $\Bl_{P_1,\ldots,P_n}\pp^3$. 
%
Let $F_i: Y_n\to \bP^2$ denote the projection of $Y_n$ to the $i$-th factor of $(\bP^2)^n$. 

 
 
 Now we are going to compute the ED degree of the affine multiview variety $X_n$.
Since $X_n$ is a smooth affine variety,
 by Theorem \ref{cor_smooth}  we have that 
  \[\EDdeg(X_n)=-\chi(X_n\cap U_{\bbeta}),\]
  where $\bbeta=(\beta_0, \beta_1, \ldots, \beta_{2n})$ is a general point in $\bC^{2n+1}$ and $U_\bbeta$ is the complement of the hypersurface $\sum_{1\leq i\leq 2n}(z_i-\beta_i)^2+\beta_0=0$ in $\bC^{2n}$. 

Write each $\bP^2$ as $\bC^2\cup \bP^1_{\infty}$, where $\bC^2$ is the chosen affine chart and $\bP^1_{\infty}$ is the line at infinity. 
Denote the hypersurface $\bP^2\times \cdots\times \bP^2\times \bP^1_\infty\times \bP^2\times\cdots\times \bP^2$ in $(\bP^2)^n$ by $H_{\infty, i}$, where $\bP^1_\infty$ is the $i$-th factor. 
Let $H_\infty=\bigcup_{1\leq i\leq n}H_{\infty, i}$. Denote by $H_Q$ the closure of the hypersurface $\sum_{1\leq i\leq 2n}(z_i-\beta_i)^2+\beta_0=0$ in $(\bP^2)^n$. 
In the remaining of this proof, we will use the following notations:
$$D_Q:=Y_n\cap H_Q, \ \ D_{\infty, i}:=Y_n\cap H_{\infty, i}, \ \ D_\infty:=Y_n\cap H_\infty.$$
Notice that $H_\infty$ is the complement of the affine chart $\cc^{2n}$ in $(\pp^2)^n$, thus $D_\infty$ is the complement of $X_n$ in $Y_n$. 

The main result of this section is the following. 
\begin{theorem}\label{thm_vision}
Suppose the blowup points in $\bP^2$, the affine chart in each $\bP^2$ and the $\bbeta\in \bC^{2n+1}$ are general. Then
\begin{enumerate}
\item\label{i1} $\chi(Y_n)=2n+4$;
\item\label{i2} $\chi(D_Q)=4n^3-9n^2+9n$;
\item\label{i3} $\chi(D_\infty)=\frac{n^3}{6}-\frac{3n^2}{2}+\frac{16n}{3}$;
\item\label{i4} $\chi(D_Q\cap D_\infty)=-\frac{n^3}{3}+\frac{13n}{3}$. 
\end{enumerate}
\end{theorem}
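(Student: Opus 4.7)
Parts (1), (3), and (4) admit elementary topological proofs by exploiting the blowup description of $Y_n$ and suitable inclusion--exclusion arguments, while part (2) is the main obstacle. For (1), since $n \geq 3$ and the cameras are in general position, $F : Y_n \to (\bP^2)^n$ is a closed embedding identifying $Y_n \cong \Bl_{P_1,\ldots,P_n}\bP^3$, and each point blowup adds $\chi(\bP^2) - 1 = 2$ to the Euler characteristic, so $\chi(Y_n) = \chi(\bP^3) + 2n = 4 + 2n$. For (3), decompose $D_\infty = \bigcup_i D_{\infty,i}$ and apply inclusion--exclusion: each $D_{\infty,i}$ is the strict transform of the focal plane $\{M_i = 0\}$ of the $i$-th camera (with $M_i$ the third row of $A_i$ as a linear form on $\bP^3$), which passes through $P_i$ but generically no other $P_j$, so $D_{\infty,i} \cong \Bl_{P_i}\bP^2$ with $\chi = 4$; pairwise intersections give the lines $\ell_{ij} := \{M_i = M_j = 0\}$ ($\chi = 2$); triple intersections give the points $q_{ijk} := \{M_i = M_j = M_k = 0\}$ ($\chi = 1$); and $4$-fold intersections are empty for generic cameras.

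For (4), the crucial geometric input is the containment $\ell_{jk} \subset D_Q$ for every pair $j \neq k$; this is verified by substituting $t_j = t_k = 0$ into the multi-homogeneous defining polynomial $\tilde P$ of $H_Q$ and observing that every term vanishes (each summand carries a $t_j^2$ or $t_k^2$ factor, including the two potentially problematic summands indexed by $l \in \{j, k\}$). To compute $\chi(D_Q \cap D_{\infty,i})$, I would use the factorization $\tilde P|_{t_i = 0} = (x_i^2 + y_i^2) \prod_{k \neq i} t_k^2$, which identifies $D_Q \cap D_{\infty,i}$ as the union of two $\bP^1$-fibers of $F_i$ over the isotropic points $[1:\pm i:0] \in \bP^1_\infty \subset \bP^2_i$ (from $x_i^2 + y_i^2$) together with the $n-1$ lines $\ell_{ik}$, $k \neq i$ (from the $t_k^2$ factors). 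An inclusion--exclusion on this configuration of curves (the two fibers are disjoint, each meets each $\ell_{ik}$ transversely in one point, and distinct $\ell_{ik}, \ell_{il}$ meet at $q_{ikl}$) gives $\chi(D_Q \cap D_{\infty,i})$ explicitly. A further inclusion--exclusion on $D_Q \cap D_\infty = \bigcup_i (D_Q \cap D_{\infty,i})$, using $\ell_{ij} \subset D_Q$ to evaluate $D_Q \cap D_{\infty,i} \cap D_{\infty,j} = \ell_{ij}$ and similarly for triples, yields (4).

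Part (2) is the main obstacle. Via the blowup $\pi : Y_n \to \bP^3$, one identifies $D_Q$ with the strict transform of the degree-$2n$ hypersurface $Q := V(\tilde P) \subset \bP^3$, confirmed by the class identity $[D_Q] = 2nH - 2\sum E_i$ matching the class of this strict transform. A direct Taylor expansion of $\tilde P$ reveals that $Q$ has ordinary double points at each $P_i$ (resolved in $D_Q$ by the blowup, each replaced by a smooth conic in the corresponding $E_i$), transverse $A_1$-singularities along the $\binom{n}{2}$ lines $\ell_{jk}$ (persistent in $D_Q$ since these lines avoid the $P_i$'s), and more intricate singularities at the $\binom{n}{3}$ triple points $q_{jkl}$ where three of these lines meet. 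My plan is to construct a smooth resolution $\widetilde{D_Q}$ by blowing up $Y_n$ first at the triple points $q_{jkl}$ (separating the $\ell_{jk}$'s) and then along the strict transforms of the $\ell_{jk}$'s, compute $\chi(\widetilde{D_Q})$ via Chern-class formulas for smooth divisors on the resulting smooth 3-fold, and recover $\chi(D_Q)$ by Euler-characteristic additivity over the exceptional loci. The principal difficulty is the bookkeeping around the triple-point configuration: three lines meet at each $q_{jkl}$, the local singular structure of $D_Q$ there is nontrivial, and the order of blowups must be chosen with care so that the resulting 3-fold remains smooth and the exceptional divisors of each successive blowup can be tracked through the class computation.
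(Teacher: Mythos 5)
Your arguments for (1), (3), and (4) follow essentially the same route as the paper: blowup additivity for (1), inclusion--exclusion over the $D_{\infty,i}$ for (3), and an inclusion--exclusion over the configuration of the isotropic fibers $K_i^{\pm}$ and the lines $\ell_{ij}$ inside $D_Q\cap D_\infty$ for (4). These parts are sound.

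For (2), you propose a resolution-of-singularities strategy (blow up triple points, then the lines $\ell_{jk}$, compute $\chi$ of the smooth strict transform via Chern classes, and descend by additivity along the resolution fibers), whereas the paper instead computes $\chi(D')$ for a smooth linearly equivalent divisor via Chern classes and then corrects to $\chi(D_Q)$ using the Parusi\'nski--Pragacz formula with Milnor fiber Euler characteristics over a Whitney stratification. Your route is viable in principle, but the singularity analysis you state has a genuine gap that would make the final Euler-characteristic bookkeeping come out wrong. Along each line $\ell_{jk}$, the singularity of $D_Q$ is \emph{not} a transverse $A_1$ everywhere: at four special points of $\ell_{jk}$ (where $\ell_{jk}$ meets $K_i^{\pm}$ or $K_j^{\pm}$, i.e., where $(y_{2i-1}-\beta_{2i-1}x_i)^2+(y_{2i}-\beta_{2i}x_i)^2=0$ or the analogous condition in $j$ holds), the transverse type degenerates to a Whitney umbrella $xy^2=z^2$. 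Blowing up the line still resolves the singularity there, but the fiber of the resolution over such a point is a single point rather than the two points you get over a generic node of $\ell_{jk}$. If this is not tracked, additivity over-counts by $4$ per line, i.e., by $4\binom{n}{2}=2n^2-2n$ in total. Relatedly, your description of the singularity of $Q=\pi(D_Q)$ at each $P_i$ as an ordinary double point with exceptional curve a \emph{smooth} conic is also off: the tangent cone at $P_i$ is the rank-two quadric $(y_{2i-1}-\beta_{2i-1}x_i)^2+(y_{2i}-\beta_{2i}x_i)^2$, so $D_Q\cap E_i$ is a pair of lines (with $\chi=3$), not a smooth conic (with $\chi=2$). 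Finally, the ``more intricate'' triple-point singularity needs to be pinned down explicitly (the paper shows it is $x^2y^2+x^2z^2+y^2z^2=x^2y^2z^2$) before the resolution fibers there can be computed. With these corrections your approach could in principle be pushed through, but as written the stratification of the singular locus is incomplete and some of the local models are misidentified, so the descent from $\chi(\widetilde{D_Q})$ to $\chi(D_Q)$ would not give $4n^3-9n^2+9n$.
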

Since the Euler characteristic is additive in the complex algebraic setting, Theorem \ref{thm_vision} then yields:
\begin{eqnarray*}
\chi(X_n\cap U_{\bbeta}) 
&=& \chi(X_n\setminus D_Q)\\
&=& \chi\big(Y_n\setminus(D_\infty\cup D_Q)\big)\\
&=& \chi(Y_n)-\chi(D_Q)-\chi(D_\infty)+\chi(D_Q\cap D_\infty)\\ 
&=& -\frac{9}{2}n^3+\frac{21}{2}n^2-8n+4.
\end{eqnarray*}
Therefore, equation (\ref{equ_vision}) follows from Theorem \ref{thm_vision} and Theorem \ref{cor_smooth}. 

The rest of this section will be devoted to prove the above theorem. We will first prove the statements (\ref{i1}), (\ref{i3}), (\ref{i4}). 
\begin{proof}[Proof of Theorem \ref{thm_vision} (\ref{i1}), (\ref{i3}), (\ref{i4})]
We start with statement (\ref{i1}). Since $Y_n$ is isomorphic to the blowup of $n$ points in $\bP^3$, and since blowing up each point increases the Euler characteristic by 2, (\ref{i1}) follows. 

Next, we prove (\ref{i3}). 
Denote the blowup map by $\pi: Y_n\to \bP^3$, and let $E_i=\pi^{-1}(P_i)$ be the divisor in $Y_n$ corresponding to blowing up $P_i$.
By construction, $Y_n\cap H_{\infty, i}$ is the preimage of a general hyperplane in $\bP^2$ under the projection $F_i: Y_n\to \bP^2$. In particular, $D_{\infty, i}=Y_n\cap H_{\infty, i}$ does not intersect $E_j$ for $j\neq i$. Thus, $D_{\infty, i}$ is isomorphic to the blowup of $\bP^2$ at a point, and $\pi(D_{\infty, i})$ is a general hyperplane in $\bP^3$ passing through $P_i$. Therefore, for any distinct pair  $i, j\in \{1, \ldots, n\}$, the intersection $D_{\infty, i}\cap D_{\infty, j}=Y_n\cap H_{\infty, i}\cap H_{\infty, j}$ is isomorphic to $\bP^1$. For any distinct triple $i, j, k\in \{1, \ldots, n\}$, the intersection $D_{\infty, i}\cap D_{\infty, j}\cap D_{\infty, k}=Y_n\cap H_{\infty, i}\cap H_{\infty, j}\cap H_{\infty, k}$ consists of a point. The intersections of $Y_n$ with four or more hyperplanes at infinity are empty. Altogether, we have: 
\begin{itemize}
\item $\chi(D_{\infty, i})=4$;
\item $\chi(D_{\infty, i}\cap D_{\infty, j})=2$;
\item $\chi(D_{\infty, i}\cap D_{\infty, j}\cap D_{\infty, k})=1$.
\end{itemize}
Thus, by the inclusion-exclusion principle for $D_\infty=\bigcup_{1\leq i\leq n}D_{\infty, i}$, we get
$$\chi(D_\infty)=4n-2{n \choose 2}+{n\choose 3}=\frac{n^3}{6}-\frac{3n^2}{2}+\frac{16n}{3}.$$

To prove (\ref{i4}), we recall that $H_Q$ is the closure of the affine hypersurface 
\begin{equation}\label{equ_affine}
\sum_{1\leq i\leq 2n}(z_i-\beta_i)^2+\beta_0=0
\end{equation}
in $(\bP^2)^n$. We introduce homogeneous coordinates $x_i, y_{2i-1}, y_{2i}$ with $z_{2i-1}=\frac{y_{2i-1}}{x_i}$ and $z_{2i}=\frac{y_{2i}}{x_i}$ for $1\leq i\leq n$. Then the homogenization of (\ref{equ_affine}), and hence the equation of $H_Q$, is
\begin{multline}\label{equ_hom}
\big((y_1-\beta_1x_1)^2+(y_2-\beta_2x_1)^2\big)x_2^2\cdots x_n^2+\cdots\\
+x_1^2\cdots x_{n-1}^2\big((y_{2n-1}-\beta_{2n-1}x_n)^2+(y_{2n}-\beta_{2n}x_n)^2\big)+\beta_0 x_1^2\cdots x_n^2=0,
\end{multline}
and the hyperplane $H_{\infty, i}$ is defined by $x_i=0$. Therefore, 
\begin{multline}
H_{\infty, i}\cap H_Q=\\
\{y_{2i-1}+\sqrt{-1}y_{2i}=x_i=0\}\cup \{y_{2i-1}-\sqrt{-1}y_{2i}=x_i=0\}\cup \bigcup_{j\neq i}\{x_i=x_j=0\}.
\end{multline}
We next introduce the following notations:
\begin{align*}
K_i^+&:=Y_n\cap \{y_{2i-1}+\sqrt{-1}y_{2i}=x_i=0\}\\
K_i^-&:=Y_n\cap \{y_{2i-1}-\sqrt{-1}y_{2i}=x_i=0\}\\
L_{i, j}&:=Y_n\cap \{x_i=x_j=0\}, j \neq i.
\end{align*}
By construction,  $K_i^+$ is equal to $F_i^{-1}([0, -\sqrt{-1}, 1])$. Recall that $H_{\infty, i}$ is the strict transformation of a general hyperplane passing through $P_i$. In other words, $\pi(D_{\infty, i})=\pi(Y_n\cap H_{\infty, i})$ is a general hyperplane in $\bP^3$ passing through $P_i$. Then, $\pi(K_i^+)$ is a general line in $\pi(D_{\infty, i})$ passing through $P_i$. So is $\pi(K_i^-)$. Moreover, $\pi(D_{\infty, i})\cap \pi(D_{\infty, j})$ is a line in $\bP^3$, and it is isomorphic to $D_{\infty, i}\cap D_{\infty, j}$ via the map $\pi$. Therefore, we conclude the following:
\begin{enumerate}
\item $K_i^{\pm}\cap K_j^{\pm}=\emptyset$, for $i\neq j$;
\item $K_i^{\pm}\cap L_{i, j}$ consists of a point, for $j\neq i$;
\item $K_i^{\pm}\cap  L_{j, k}=\emptyset$, for distinct $i, j, k$;
\item $ L_{i, j}\cap  L_{i, k}= L_{i, j}\cap  L_{i, k}\cap  L_{j, k}$ consists of a point, for distinct $i, j, k$;
\item $ L_{i, j}\cap  L_{k, l}=\emptyset$, for distinct $i, j, k, l$;
\item $ K_i^{\pm}\cap  L_{i, j}\cap  L_{i, k}=\emptyset$, for distinct $i, j, k, l$. 
\end{enumerate}
Notice that
$$D_Q\cap D_\infty=Y_n\cap H_Q\cap H_\infty=\bigcup_{i}K_i^{+}\cup \bigcup_{i} K_i^{-}\cup \bigcup_{i\neq j} L_{i, j}.$$
Thus, by the inclusion-exclusion principle, we have 
$$\chi\big(D_Q\cap D_\infty\big)=2n+2n+2{n \choose 2}-2n(n-1)-2{n \choose 3}=-\frac{n^3}{3}+\frac{13n}{3}.$$
\end{proof}

The computation of $\chi(D_Q)$ is more difficult, since $D_Q$ is a hypersurface in $Y_n$ with a one-dimensional singular locus. A general formula for computing the Euler characteristic of such singular hypersurfaces is given by the following result. 

\begin{theorem}\cite{PP}, \cite[Theorem 10.4.4]{Max}\label{thm_euler}
Let $X$ be a smooth complex projective variety, and let $V$ be a very ample divisor in $X$. Let $V=\bigsqcup_{S\in \sS} S$ be a Whitney stratification of $X$. Let $W$ be another divisor on $X$ that is linearly equivalent to $V$. Suppose $W$ is smooth and $W$ intersects $V$ transversally in the stratified sense (with respect to the above Whitney stratification). Then we have
\begin{equation}\label{equ_euler}
\chi(W)-\chi(V)=\sum_{S\in \sS}\mu_S \cdot \chi(S\setminus W)
\end{equation}
where $\mu_S$ is the Euler characteristic of the reduced cohomology of the Milnor fiber at any point $x\in S$. 
\end{theorem}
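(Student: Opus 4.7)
The plan is a degeneration argument using the pencil generated by $V$ and $W$. Form the incidence variety $\cZ = \{(x,t) \in X \times \bP^1 : x \in V_t\}$, where $\{V_t\}$ is the pencil with $V_0 = V$ and $V_\infty = W$. Let $p: \cZ \to X$ be the first projection and $\pi: \cZ \to \bP^1$ the second. Since $p$ is a $\bP^1$-bundle over the base locus $B = V \cap W$ and an isomorphism over $X \setminus B$, additivity gives $\chi(\cZ) = \chi(X) + \chi(B)$. On the other hand, by Bertini and the transversality of $W$ to the stratification, $\pi$ restricts to a locally trivial fibration outside a finite critical set $\Sigma \ni 0$ not containing $\infty$, with generic fiber $W$, so that
\[
\chi(\cZ) = \chi(W)\cdot(2 - |\Sigma|) + \sum_{t \in \Sigma} \chi(V_t).
\]

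The heart of the proof is the local identification of Milnor-fiber contributions with the $\mu_S$. At a point $x$ of a stratum $S$ of $V$, Thom's first isotopy lemma applied to the Whitney stratified family $\pi$ ensures that the Milnor fiber of $\pi$ at $x$ is homotopy-equivalent to the ambient Milnor fiber of $V$ at $x$, whose reduced Euler characteristic is $\mu_S$ by the stratification hypothesis. Combined with additivity of Euler characteristic, this converts each singular-fiber correction into the integral of the constructible function $\mu_V := \sum_S \mu_S \mathbf{1}_S$ against the appropriate characteristic function. The transversality of $W$ to the stratification implies that at each point of $V \cap W$ the map $\pi$ is locally a smooth submersion, so the vanishing-cycle complex $\phi_\pi \Q_\cZ$ has trivial stalks there, and the effective domain of integration is cut down to $V \setminus W$.

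Assembling these ingredients via the specialization identity $\chi(W) = \chi(V) + \chi(V,\phi_\pi \Q_\cZ)$ and the stratified evaluation $\chi(V,\phi_\pi \Q_\cZ) = \sum_S \mu_S \chi(S \setminus W)$ yields the theorem. The main obstacle is the precise bookkeeping along $B$: one must verify both that transversality kills the vanishing-cycle contribution along $B$ and that the $\chi(B)$ term from the blowup formula balances against the sum of contributions from the extra singular fibers $V_t$ with $t \in \Sigma \setminus \{0\}$. A cleaner alternative would be MacPherson's characteristic-cycle framework in $T^*X$: the characteristic cycle of $\mathbf{1}_V$ decomposes as a combination of conormal varieties $T^*_{\overline{S}}X$ with coefficients expressed in local Euler obstructions, and the difference $\chi(W) - \chi(V)$ appears as an intersection against the characteristic cycle of $\mathbf{1}_W$, with the Milnor numbers $\mu_S$ arising naturally as the weights and the transversality translating into transversality of characteristic cycles. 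This route matches the local Euler-obstruction setup of Theorem~\ref{csing} and is likely the most economical path to the stated formula.
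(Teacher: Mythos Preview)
The paper does not prove this theorem: it is quoted from \cite{PP} and \cite[Theorem~10.4.4]{Max} and used as a black box, with only the definition of $\mu_S$ and Remark~\ref{topm} added afterwards. So there is no ``paper's own proof'' to compare against. That said, your degeneration-via-pencil outline is indeed the standard route to such formulas, and is close in spirit to the arguments in the cited references.

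There is, however, a genuine gap in your sketch. Your claim that ``at each point of $V\cap W$ the map $\pi$ is locally a smooth submersion'' is false in general. Take $X=\bP^3$, $V=\{xy=0\}$ with singular stratum $S_1=\{x=y=0\}$, and $W$ a general plane meeting $S_1$ transversally at a point $p$. In local coordinates the incidence variety $\cZ=\{xy=\lambda g\}$ is \emph{singular} at $(p,0)$, since $d(xy)$ vanishes there and $g(p)=0$; so $\pi$ is certainly not a submersion. The conclusion you want (vanishing of $\phi_\pi\Q_\cZ$ along $V\cap W$) is nevertheless correct, but for a different reason: transversality of $W$ to $S$ means $dg|_{T_pS}\neq 0$, so one can solve $xy-\lambda g=0$ for a normal coordinate and the local Milnor fiber of $\pi$ at $(p,0)$ is contractible. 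You should replace the submersion claim with this argument (or invoke Thom's $a_f$ condition and local topological triviality along strata).

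A second, smaller issue: the bookkeeping you flag, matching the $\chi(B)$ term against the contributions of the other singular fibers $V_t$ with $t\in\Sigma\setminus\{0\}$, is unnecessary. It is cleaner to avoid the global pencil and instead work over a small disc $\Delta\ni 0$: by very ampleness and Bertini the nearby fibers $V_\lambda$ are smooth and diffeomorphic to $W$, so the single specialization identity $\chi(W)-\chi(V)=\chi(V,\phi_\pi\Q_\cZ)$ already gives the formula once you know the stalks of $\phi_\pi\Q_\cZ$ on $V\setminus W$ compute $\mu_S$ and vanish on $V\cap W$. Your characteristic-cycle alternative is also a legitimate route.
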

The notion of Milnor fiber is essential in the study of hypersurface singularities. Here we give a precise definition of the constants $\mu_S$, and we refer the readers to \cite[Chapter 10]{Max} for more details. Choose any Whitney stratum $S\in \sS$ and any point $x\in S$. In a sufficiently small ball $B_{\varepsilon,x}$ centered at $x$, the hypersurface $V$ is equal to the zero locus of a holomorphic function $f$. The Milnor fiber of $V$ at $x \in S$ is given by 
$$F_x=B_{\varepsilon,x} \cap \{f=t\}$$
for $0<|t| \ll \varepsilon$. The topological type of the Milnor fiber $F_x$ is independent of the choice of the local equation $f$ at $x$, and it is constant along the given stratum containing $x$; in particular, 
$$\mu_S:=\sum_{k\geq 1} (-1)^k \dim \widetilde{H}^k(F_x; \bQ)$$
is a well-defined intrinsic invariant of the stratum $S$ of $V$. 

\begin{remark}\label{topm} Note that only singular strata of $V$ contribute to the right-hand side of formula (\ref{equ_euler}) since the Milnor fiber at a smooth point is contractible.
\end{remark}

We will prove Theorem \ref{thm_vision} (\ref{i2}) in the following three steps. 
\begin{enumerate}
\item Compute the Euler characteristic of a smooth divisor in $Y_n$ that is linearly equivalent to $D_Q=Y_n\cap H_Q$. 
\item Find the singular locus of $D_Q$ and construct a Whitney stratification $\sS$ of $D_Q$. 
\item Compute the constants $\mu_S$ for each (singular) stratum $S\in \sS$. 
\end{enumerate}

In $(\bP^2)^n$, the hypersurface $H_Q$ is defined by the homogeneous equation (\ref{equ_hom}). From this equation, it is clear that we have a linear equivalence of divisors in $(\bP^2)^n$,
$$H_Q\equiv 2H_{\infty, 1}+\cdots+2H_{\infty, n}.$$
Recall that $Y_n$ is a subvariety of $(\bP^2)^n$ and $F_i$ is the projection from $Y_n$ to the $i$-th factor of $(\bP^2)^n$. Thus, as divisors of $Y_n$, we have
\begin{align*}
D_Q&\equiv 2Y_n\cap H_{\infty, 1}+\cdots+2Y_n\cap H_{\infty, n}\\
&\equiv 2F_1^*(H_{\bP^2})+\cdots+2F_n^*(H_{\bP^2}),
\end{align*}
where $H_{\bP^2}$ denotes a line in $\bP^2$. By the construction of $Y_n$ and $F_i$, 
$$F_i^*(H_{\bP^2})\equiv D_H+E_i$$
where $D_H$ is the preimage of a general hyperplane of $\bP^3$ under $\pi: Y_n\to \bP^3$, and $E_i=\pi^{-1}(P_i)$. Therefore, 
\begin{equation}\label{equ_0}
D_Q\equiv 2nD_H+2E_1+\cdots+2E_n.
\end{equation}
Clearly, $H_Q$ is a very ample divisor in $(\bP^2)^n$, and hence the divisor $D_Q$ in $Y_n$ is also very ample. Therefore, a general divisor, denoted by $D'$, in the linear system $\Gamma(Y_n, \sO(D_Q))$ is smooth.
Thus we have a short exact sequence of vector bundles on $D'$,
\begin{equation}\label{equ_SES}
0\to T_{D'}\to T_{Y_n}|_{D'}\to N_{D'/Y_n}\to 0.
\end{equation}
By the adjunction formula, we have
$$N_{D'/Y_n}\cong \sO_{Y_n}(D')|_{D'}.$$
By (\ref{equ_SES}) and the Whitney sum formula for the total Chern class, we have
$$c(T_{Y_n}|_{D'})=c(T_{D'})c\big(\sO_{Y_n}(D')|_{D'}\big)$$
or equivalently,
\begin{equation}\label{equ_1}
c(T_{D'})=\frac{c(T_{Y_n}|_{D'})}{(1+[D'])|_{D'}}.
\end{equation}
By the standard formula for the total Chern class of the projective space and of a blowup (see, e.g., \cite[Example 3.2.11, Example 15.4.2]{Fulton}), we have 
\begin{equation}\label{equ_2}
c(T_{Y_n})=(1+[D_H])^4+\sum_{1\leq i\leq n}(1+[E_i])(1-[E_i])^3-1.
\end{equation}
Since $D_Q\equiv D'$,  by (\ref{equ_0}), (\ref{equ_1}) and (\ref{equ_2}), we have
\begin{equation}\label{equ_3}
c(T_{D'})=\left.\left((1+[D_H])^4+\sum_{1\leq i\leq n}(1+[E_i])(1-[E_i])^3-1\right)\cdot\left(1+2n[D_H]+2\sum_{1\leq i\leq n}[E_i]\right)^{-1}\right\vert_{D'}.
\end{equation}
Since $[D_H]\cdot [E_i]=0$ and $[E_i]\cdot [E_j]=0$ for any $i\neq j$, (\ref{equ_3}) implies
\begin{equation}
c_2(T_{D'})=\left.(4n^2-8n+6)[D_H]^2\right\vert_{D'}.
\end{equation}
By the Gauss-Bonnet theorem, we have
\begin{equation}
\chi(D')=\int_{D'}\left.(4n^2-8n+6)[D_H]^2\right\vert_{D'}=\int_{Y_n}(4n^2-8n+6)[D_H]^2\cdot [D'].
\end{equation}
Since $[D']=2n[D_H]+2\sum_{1\leq i\leq n}[E_i]$ and since $[D_H]\cdot [E_i]=0$ for any $i$, we have
\begin{equation}
\int_{Y_n}(4n^2-8n+6)[D_H]^2\cdot [D']=\int_{Y_n}(8n^3-16n^2+12n)[D_H]^3=8n^3-16n^2+12n.
\end{equation}
In summary, we have proved the following result:
\begin{proposition}
For a general divisor $D'$ in the linear system $\Gamma(Y_n, \sO(D_Q))$, we have 
\begin{equation}\label{equ_gen}
\chi(D')=8n^3-16n^2+12n.
\end{equation}
\end{proposition}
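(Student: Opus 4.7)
The plan is to compute $\chi(D')$ via the Gauss--Bonnet theorem applied to a smooth surface in the linear system $|D_Q|$ on $Y_n$. First I would note that since $D_Q$ is the restriction of the very ample divisor $H_Q$ on $(\bP^2)^n$ to $Y_n$, the linear system $|D_Q|$ is base-point free (indeed very ample), so Bertini's theorem guarantees that a general member $D'$ is smooth. From the explicit homogenization of the squared Euclidean distance equation one reads off the divisor class
$$[D'] = [D_Q] = 2n[D_H] + 2\sum_{i=1}^{n}[E_i]$$
in $\Pic(Y_n)$, where $D_H$ is the pullback of a generic hyperplane of $\bP^3$ under the blowup map $\pi \colon Y_n \to \bP^3$, and $E_i = \pi^{-1}(P_i)$.

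Next I would compute the total Chern class $c(T_{Y_n})$ by combining the Euler sequence on $\bP^3$ with the standard formula for the tangent bundle of a blowup at smooth points, giving
$$c(T_{Y_n}) = (1+[D_H])^4 + \sum_{i=1}^{n}(1+[E_i])(1-[E_i])^3 - 1.$$
The short exact sequence $0 \to T_{D'} \to T_{Y_n}|_{D'} \to N_{D'/Y_n} \to 0$, together with the adjunction identification $N_{D'/Y_n} \cong \sO_{Y_n}(D')|_{D'}$ and the Whitney sum formula, then yields $c(T_{D'}) = c(T_{Y_n}|_{D'}) \cdot (1 + [D']|_{D'})^{-1}$. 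I would expand both factors modulo degree three and use the orthogonality relations $[D_H] \cdot [E_i] = 0$ (since a generic hyperplane of $\bP^3$ avoids the blown-up points $P_i$) and $[E_i] \cdot [E_j] = 0$ for $i \neq j$ (disjoint exceptional divisors) to eliminate all mixed terms. After these cancellations only the $[D_H]^2$ component should survive, producing $c_2(T_{D'}) = (4n^2 - 8n + 6)\,[D_H]^2|_{D'}$.

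Finally, by Gauss--Bonnet and the projection formula,
$$\chi(D') = \int_{D'} c_2(T_{D'}) = \int_{Y_n}(4n^2 - 8n + 6)\,[D_H]^2 \cdot [D'].$$
Substituting $[D'] = 2n[D_H] + 2\sum_i [E_i]$ and again applying $[D_H]\cdot[E_i]=0$, the $[E_i]$ terms drop out and the computation reduces to $(8n^3 - 16n^2 + 12n)\int_{Y_n}[D_H]^3 = 8n^3 - 16n^2 + 12n$, since $[D_H]^3 = 1$ on $Y_n$ as $D_H$ is pulled back from a hyperplane class on $\bP^3$.

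The main obstacle is the coefficient bookkeeping in the Chern class expansion: one must carefully multiply out $c(T_{Y_n})$ and the power series $(1 + 2n[D_H] + 2\sum_i [E_i])^{-1}$ to degree two, track signs in the blowup contribution $(1+[E_i])(1-[E_i])^3 - 1$, and verify that the numerical coefficient $4n^2 - 8n + 6$ emerges correctly. The underlying geometry is elementary once the orthogonality relations are in place; the difficulty is purely computational.
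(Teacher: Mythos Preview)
Your proposal is correct and follows essentially the same approach as the paper: both derive the divisor class $[D']=2n[D_H]+2\sum_i[E_i]$, compute $c(T_{Y_n})$ via the blowup formula, apply adjunction and Whitney sum to extract $c_2(T_{D'})=(4n^2-8n+6)[D_H]^2|_{D'}$ using the orthogonality relations, and finish with Gauss--Bonnet to obtain $8n^3-16n^2+12n$. The only differences are expository (you explicitly invoke Bertini and the projection formula), not mathematical.
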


Recall that the hypersurface $H_Q\subset (\bP^2)^n$ is defined by equation (\ref{equ_hom}), which can be rewritten as
\begin{multline}\label{equ_hom2}
\left(\left(y_1^2+y_2^2\right)x_2^2\cdots x_n^2+\cdots+x_1^2\cdots x_{n-1}^2 \left(y_{2n-1}^2+y_{2n}^2\right)\right)+(\beta_1^2+\cdots+\beta_{2n}^2+\beta_0)x_1^2\cdots x_{2n}^2\\
-2\left((\beta_1 y_1+\beta_2 y_2)x_1x_2^2\cdots x_n^2+\cdots+x_1^2\cdots x_{n-1}^2 (\beta_{2n-1}y_{2n-1}+\beta_{2n}y_{2n})x_n\right)=0
\end{multline}
If we consider $y_{2i-1}, y_{2i}$ and $x_i$ as sections of line bundles $Y_n$, then $D_Q=Y_n \cap H_Q$ is a general divisor in the linear system given by a subspace of 
$$\Gamma(Y_n, \sO(2n D_{H}+2E_1+\cdots+2E_n))$$
generated by the sections
\begin{itemize}
\item $\left(\left(y_1^2+y_2^2\right)x_2^2\cdots x_n^2+\cdots+x_1^2\cdots x_{n-1}^2 \left(y_{2n-1}^2+y_{2n}^2\right)\right)$,
\item $x_1^2\cdots x_{2n}^2$,
\item $y_{2i-1}x_i\cdot x_1^2\cdots \widehat{x_i^2}\cdots x_n^2$ and $y_{2i}x_i\cdot x_1^2\cdots \widehat{x_i^2}\cdots x_n^2$, where \ $\widehat{\cdot}$ \ means that the respective term is removed. 
\end{itemize}
It is easy to see that the base locus of this linear system is equal to $\bigcup_{i< j}D_{\infty, i}\cap D_{\infty, j}$. By the Bertini theorem, $D_Q$ is smooth away from its base locus. On the other hand, $D_Q$ has multiplicity at least 2 along $\bigcup_{i< j}D_{\infty, i}\cap D_{\infty, j}$. Thus, we have proved the following: 
\begin{proposition}
The singular locus of $D_Q$ is equal to
$$\bigcup_{ i< j }D_{\infty, i}\cap D_{\infty, j}.$$
\end{proposition}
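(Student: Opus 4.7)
The plan is to realize $D_Q$ as a general member of an explicit linear subsystem on the smooth variety $Y_n$, determine the base locus of that subsystem, and then argue smoothness versus singularity via Bertini on one side and a local multiplicity calculation on the other.

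First I would rewrite equation (\ref{equ_hom}) by expanding the binomials $(y_{2i-1}-\beta_{2i-1}x_i)^2+(y_{2i}-\beta_{2i}x_i)^2$, so that the defining polynomial becomes a $\bbeta$-linear combination of the sections listed just before the proposition, namely
$(y_{2i-1}^2+y_{2i}^2)\prod_{k\neq i}x_k^2$, $y_{2i-1}x_i\prod_{k\neq i}x_k^2$, $y_{2i}x_i\prod_{k\neq i}x_k^2$, and $x_1^2\cdots x_n^2$. Since each of these is a section of $\sO_{Y_n}(2nD_H+2E_1+\cdots+2E_n)$, this identifies $D_Q$ with a general member of the corresponding linear subsystem $\Lambda\subset \Gamma(Y_n,\sO(2nD_H+2E_1+\cdots+2E_n))$, where generality is ensured by the genericity of $\bbeta$.

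Next I would compute the base locus $\mathrm{Bs}(\Lambda)$. At any point of $Y_n$ where at most one coordinate $x_i$ vanishes, the section $x_1^2\cdots x_n^2$ or one of the sections $y_{2j-1}x_j\prod_{k\neq j}x_k^2$ is nonzero, so such a point is not in the base locus. Conversely, if $x_i=x_j=0$ for some $i\neq j$, then every one of the generating sections is divisible by $x_ix_j$ and hence vanishes. Thus $\mathrm{Bs}(\Lambda)=\bigcup_{i<j}(D_{\infty,i}\cap D_{\infty,j})$. Since $Y_n$ is smooth, Bertini's theorem applied to the linear system $\Lambda$ yields that $D_Q$ is smooth away from this base locus.

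It remains to show that $D_Q$ is singular at every point of $\bigcup_{i<j}D_{\infty,i}\cap D_{\infty,j}$. The plan is a straightforward local multiplicity check: at a point of $Y_n$ where $x_i=x_j=0$, inspect each monomial in the defining equation and verify that it lies in the ideal $(x_i,x_j)^2$. Indeed, each term of \eqref{equ_hom2} carries a factor of the form $x_a^2x_b^{\epsilon}\cdots$ or $x_ax_b^2\cdots$ with $\{a,b\}\supset\{i,j\}$ in at least one such pair, so in any case the full defining section vanishes to order at least $2$ along $\{x_i=x_j=0\}\cap Y_n$. Therefore the partial derivatives of the defining section vanish there, proving that $D_Q$ is singular along $D_{\infty,i}\cap D_{\infty,j}$ for every $i<j$. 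Combining the two parts completes the proof.

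The main obstacle I anticipate is not any single step but the bookkeeping in the multiplicity check: one must be sure that \emph{every} monomial in the generating sections of $\Lambda$ lies in $(x_i,x_j)^2$, which requires looking simultaneously at the three types of sections above and at all choices of the removed index. Once the combinatorics of the exponents is laid out cleanly, the conclusion is immediate.
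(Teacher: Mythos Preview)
Your overall strategy is exactly the paper's: realize $D_Q$ as a general member of an explicit linear subsystem, determine the base locus, apply Bertini off the base locus, and then check multiplicity $\ge 2$ along the pairwise intersections $D_{\infty,i}\cap D_{\infty,j}$. The multiplicity check is fine. The gap is in your base-locus computation.

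You assert that at a point of $Y_n$ where only one $x_i$ vanishes, ``the section $x_1^2\cdots x_n^2$ or one of the sections $y_{2j-1}x_j\prod_{k\neq j}x_k^2$ is nonzero.'' This is false: for $j=i$ the section carries the factor $x_i$, and for $j\neq i$ the product $\prod_{k\neq j}x_k^2$ contains $x_i^2$; either way it vanishes. The only generator that could survive is $(y_{2i-1}^2+y_{2i}^2)\prod_{k\neq i}x_k^2$ (or the sum over $\ell$, which reduces to just this term when only $x_i=0$), and that vanishes precisely when $y_{2i-1}^2+y_{2i}^2=0$. Hence the base locus also contains the curves $K_i^{\pm}=Y_n\cap\{x_i=y_{2i-1}\pm\sqrt{-1}\,y_{2i}=0\}$, which are \emph{not} contained in $\bigcup_{i<j}D_{\infty,i}\cap D_{\infty,j}$. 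So Bertini alone does not give smoothness of $D_Q$ along $K_i^{\pm}$, and your argument as written does not establish the stated singular locus.

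To close the gap you must check smoothness of $D_Q$ directly at a point $p\in K_i^{+}$ (say) with $x_j(p)\neq 0$ for $j\neq i$. Factoring over $\bC$,
\[
(y_{2i-1}-\beta_{2i-1}x_i)^2+(y_{2i}-\beta_{2i}x_i)^2
=\bigl(y_{2i-1}+\sqrt{-1}\,y_{2i}-(\beta_{2i-1}+\sqrt{-1}\,\beta_{2i})x_i\bigr)\bigl(y_{2i-1}-\sqrt{-1}\,y_{2i}-(\beta_{2i-1}-\sqrt{-1}\,\beta_{2i})x_i\bigr),
\]
one sees that the $i$-th summand of \eqref{equ_hom} vanishes to order exactly $1$ at $p$ (the first factor vanishes simply, the second is a unit since $p\notin K_i^-$, and $\prod_{k\neq i}x_k^2$ is a unit), while every other summand carries $x_i^2$ and hence vanishes to order $\ge 2$. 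Thus $D_Q$ is smooth at $p$. With this extra verification your proof is complete; without it, it is not. (The paper's own sentence ``It is easy to see that the base locus \ldots'' glosses over the same point, so the issue is subtle rather than fatal.)
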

Therefore, a Whitney stratification of $D_Q$ must be a refinement of the stratification 
$$D_Q=S_0\sqcup \bigsqcup_{i< j}S_{i, j}\sqcup\bigsqcup_{i< j< k}S_{i, j, k},$$
with
\begin{align*}
S_0&=(D_Q)_{\reg}=D_Q\big\backslash \bigcup_{ i< j }D_{\infty, i}\cap D_{\infty, j},\\
S_{i, j}&=D_{\infty, i}\cap D_{\infty, j}\Big\backslash \bigcup_{k\neq i, k\neq j}D_{\infty, i}\cap D_{\infty, j}\cap D_{\infty,k},\\
S_{i, j, k}&=D_{\infty, i}\cap D_{\infty, j}\cap D_{\infty,k}. 
\end{align*}
Considering $x_i, y_{2i-1}, y_{2i}$ as sections of line bundles on $Y_n$, then $D_Q$ is defined as the zero section of (\ref{equ_hom}). Generically along $S_{i, j}$, the divisor $D_Q$ is analytically isomorphic to the product of a plane node and a disc, or, equivalently, the germ of $xy=0$ at the origin of $\bC^3$ with coordinates $x, y, z$. However, if a point in $S_{i, j}$ satisfies $(y_{2i-1}-\beta_{2i-1}x_i)^2+(y_{2i}-\beta_{2i}x_i)^2=0$ or $(y_{2j-1}-\beta_{2j-1}x_j)^2+(y_{2j}-\beta_{2j}x_j)^2=0$, then the germ of $D_Q$ at that point is isomorphic to the Whitney umbrella, or, equivalently, the germ of $xy^2=z^2$ at the origin of $\bC^3$. When the choice of $\bbeta=(\beta_1, \ldots, \beta_{2n})$ is general, the equation 
$$\left((y_{2i-1}-\beta_{2i-1}x_i)^2+(y_{2i}-\beta_{2i}x_i)^2\right)\left((y_{2j-1}-\beta_{2j-1}x_j)^2+(y_{2j}-\beta_{2j}x_j)^2\right)=0$$
defines 4 distinct points in $S_{i, j}$. Now, denote the subset consisting of these $4$ points in $S_{i, j}$ by $S_{i, j}^1$, and denote its complement in in $S_{i, j}$ by $S_{i, j}^0$. From the equation (\ref{equ_hom}), it is easy to see that $D_Q$ has the same singularity type along $S_{i, j}^0$. Since $S_0$ is the smooth locus of $D_Q$, we have therefore constructed a Whitney stratification of $D_Q$, namely: 
\begin{proposition}\label{prop_whitney}
The stratification 
$$D_Q=S_0\sqcup \bigsqcup_{i< j}S_{i, j}^0 \sqcup \bigsqcup_{i< j}S_{i, j}^1\sqcup \bigsqcup_{i< j< k}S_{i, j, k}$$
is a Whitney stratification of $D_Q$. 
\end{proposition}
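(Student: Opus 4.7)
The proposed decomposition is already known to be a partition of $D_Q$ into locally closed smooth subvarieties: $S_{i,j,k}$ is a single point, $S_{i,j}^1$ is a finite set of four points, $S_{i,j}^0$ is a Zariski open subset of the smooth rational curve $D_{\infty,i}\cap D_{\infty,j}\cong\mathbb{P}^1$, and $S_0=(D_Q)_{\mathrm{reg}}$ is smooth by definition. The frontier condition---that the closure of each stratum is a union of strata---is immediate from the containments $\overline{S_{i,j}^0}=S_{i,j}^0\sqcup S_{i,j}^1\sqcup\bigsqcup_{k\neq i,j} S_{i,j,k}$ and their analogues. Thus the only remaining task is to verify Whitney's regularity conditions (a) and (b) for each pair of strata.

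My plan is to identify the local analytic model of $D_Q$ at a point of each positive-codimension stratum using the explicit equation (\ref{equ_hom2}), and then invoke the fact that each such model carries the proposed decomposition as a Whitney stratification. Concretely: along $S_{i,j}^0$ the divisor $D_Q$ is analytically a product of a plane node $\{xy=0\}\subset\mathbb{C}^2$ with a disc running along $S_{i,j}^0$, whose natural stratification (singular axis together with its complement) is Whitney; at a point of $S_{i,j}^1$ the divisor $D_Q$ is analytically a Whitney umbrella $\{xy^2=z^2\}\subset\mathbb{C}^3$, whose classical decomposition (origin, singular axis minus origin, smooth locus) is a Whitney stratification; and at a point of $S_{i,j,k}$ one analyzes the local model obtained from (\ref{equ_hom2}) by expanding around $x_i=x_j=x_k=0$. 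Once the local model is identified, the product structure along the stratum together with the known Whitney regularity of the transverse singularity gives conditions (a) and (b).

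The main obstacle will be the analysis at points of $S_{i,j,k}$, where three positive-dimensional singular strata of $D_Q$ meet a single zero-dimensional stratum. Here I expect the local model to be a configuration of three smooth branches of $D_Q$, each of multiplicity two, along the three curves $D_{\infty,i}\cap D_{\infty,j}$, $D_{\infty,i}\cap D_{\infty,k}$, $D_{\infty,j}\cap D_{\infty,k}$, which meet transversally at the triple point inside the smooth ambient $Y_n$. One must rule out any extra singular behaviour at the triple point that would force a finer stratification, and verify Whitney's condition (b) for the point stratum $S_{i,j,k}$ against each of the three adjacent one-dimensional strata. Equivalently, one can appeal to the existence of a canonical analytic Whitney stratification of $D_Q$ and show that our decomposition is a coarsening of it by exhibiting constant local analytic type along each stratum; the latter reduces precisely to the three local computations above, with the triple-point computation being the delicate one.
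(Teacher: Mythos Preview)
Your approach is essentially the paper's: the paper does not give a separate proof of this proposition but rather, in the paragraphs preceding it, identifies the local analytic type of $D_Q$ along each stratum (node $\times$ disc along $S_{i,j}^0$, Whitney umbrella at points of $S_{i,j}^1$) and then asserts that constancy of the singularity type along each stratum yields a Whitney stratification. Your write-up is in fact more careful than the paper's, which treats the verification as essentially self-evident once the local models are named.

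One point of correction concerns your expectation at $S_{i,j,k}$. After the proposition the paper computes the local model there: in suitable analytic coordinates $x,y,z$ on $Y_n$ centred at the triple point, $D_Q$ is cut out by
\[
x^2y^2+x^2z^2+y^2z^2=x^2y^2z^2,
\]
which is \emph{not} a union of three smooth branches of multiplicity two. Its singular locus is indeed the union of the three coordinate axes (the three curves $D_{\infty,i}\cap D_{\infty,j}$, etc.), and the transverse slice along each axis is a node, but the surface itself does not decompose as you anticipate. This does not affect the validity of your strategy---once you compute the correct local equation from (\ref{equ_hom2}) you will arrive at this model and can verify Whitney regularity for the pair $(S_{i,j}^0,S_{i,j,k})$ directly from it---but you should adjust your heuristic picture accordingly.
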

Next, we will describe the singularity type of $D_Q$ along each stratum, and compute the Euler characteristic of the reduced cohomology of the corresponding Milnor fiber. The divisor $D_Q$ is smooth along $S_0$. We have argued above that along $S_{i, j}^0$, the divisor $D_Q$ is locally isomorphic to the germ of $xy=0$ at the origin of $\bC^3$. Moreover, along $S_{i, j}^1$, the divisor $D_Q$ is locally isomorphic to the germ of $xy^2=z^2$ at the origin. 

Near $S_{i, j, k}$, the holomorphic functions $x:=\frac{x_i}{y_{2i}}, y:=\frac{x_j}{y_{2j}}$ and $z:=\frac{x_k}{y_{2k}}$ form a coordinate system of $Y_n$. Thus, locally $D_Q$ is defined by an equation of holomorphic functions of the form 
\begin{equation}\label{equ_def}
u_1x^2y^2+u_2x^2z^2+u_3y^2z^2=u_4x^2y^2z^2
\end{equation}
where $u_1, u_2, u_3, u_4$ are locally nonvanishing holomorphic functions. After a coordinate change, equation (\ref{equ_def}) becomes 
\begin{equation}\label{equ_def2}
x^2y^2+x^2z^2+y^2z^2=x^2y^2z^2. 
\end{equation}

\begin{proposition}\label{prop_reduced}
Let $\mu_0$, $\mu_{i, j}^0$, $\mu_{i,j}^1$ and $\mu_{i, j, k}$ be the Euler characteristic of the reduced cohomology of the Milnor fiber of $D_Q$ along the strata $S_0$, $S_{i, j}^0$, $S_{i,j}^1$ and $S_{i, j, k}$, respectively. Then:
\begin{enumerate}
\item\label{j1} $\mu_0=0$;
\item\label{j2} $\mu_{i, j}^0=-1$;
\item\label{j3} $\mu_{i, j}^1=1$;
\item\label{j4} $\mu_{i, j, k}=15$. 
\end{enumerate}
\end{proposition}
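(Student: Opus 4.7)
The plan is to compute, for each of the four local models, the Euler characteristic of the Milnor fibre $F_x$, and to recover $\mu_S$ from the identity $\mu_S = \chi(F_x) - b_0(F_x)$ (which reduces to $\chi(F_x) - 1$ once connectedness is checked).

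Cases (\ref{j1})--(\ref{j3}) are short. For (\ref{j1}), $F_x$ is contractible because $S_0 \subset (D_Q)_{\reg}$. For (\ref{j2}), the local model $xy = 0$ in $\bC^3$ has Milnor fibre $\{xy = t\} \cap B_\epsilon$, a trivial family of hyperbolae over a disc in $z$ that deformation-retracts onto $S^1$, giving $\chi = 0$ and $\mu_{i,j}^0 = -1$. For (\ref{j3}), I project the Milnor fibre of the Whitney umbrella $xy^2 - z^2 = t$ to the $y$-coordinate: the open stratum $\{y \neq 0\}$ is trivialised by $(y,z) \mapsto ((z^2+t)/y^2, y, z)$ as a $\bC$-bundle over $\bC^*$ with $\chi = 0$, while the fibre over $y = 0$ consists of two copies of $\bC$ (corresponding to the two roots of $z^2 = -t$) contributing $2$. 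Additivity yields $\chi = 2$, so $\mu_{i,j}^1 = 1$. Equivalently, Sebastiani--Thom applied to $(xy^2) + (-z^2)$ identifies $F_x$ with the join $S^1 * S^0 \simeq S^2$.

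The main obstacle is case (\ref{j4}), where $f = x^2y^2 + x^2z^2 + y^2z^2 - x^2y^2z^2$ has a one-dimensional critical locus along the coordinate axes. I exploit the factorisation $f = g \circ \phi$ with $\phi(x,y,z) := (x^2, y^2, z^2)$ and $g(a,b,c) := ab + ac + bc - abc$. The Hessian of $g$ at the origin (coming from the quadratic part $ab+ac+bc$) has determinant $2$, so $g$ defines a nondegenerate $A_1$ singularity and $F_g$ is homotopy equivalent to $S^2$ with $\chi(F_g) = 2$. The map $\phi$ is the $(\bZ/2)^3$-Galois cover of $\bC^3$, unramified of degree $8$ outside $\{abc = 0\}$ and of smaller degree over loci with one, two, or three vanishing coordinates.

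To compute $\chi(F_f) = \chi(\phi^{-1}(F_g))$, I stratify $F_g$ by its intersection with $\{abc = 0\}$. For $t \neq 0$, each branch component $F_g \cap \{a = 0\} = \{bc = t\}$ is a copy of $\bC^*$ and pairwise intersections of the branch components are empty, so the branch locus has Euler characteristic zero and $\chi(F_g \cap \{abc \neq 0\}) = 2$. The $8$-fold unramified cover contributes $8 \cdot 2 = 16$, while the preimage of each $F_g \cap \{a = 0\}$ is $\{x = 0,\ y^2z^2 = t\}$, two copies of $\bC^*$ (via $yz = \pm\sqrt{t}$), contributing $0$. Summing, $\chi(F_f) = 16$; after checking connectedness of $F_f$ (which follows since $f$ is not a proper power, or equivalently since the monodromy of $\phi$ around the three branch divisors generates $(\bZ/2)^3$), we obtain $\mu_{i,j,k} = 15$. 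The principal technical difficulty lies in the bookkeeping for the ramified cover and the verification of connectedness.
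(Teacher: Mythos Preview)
Your proof is correct and follows essentially the same route as the paper: cases (\ref{j1})--(\ref{j2}) are identical, case (\ref{j3}) differs only in that you fibre over the $y$-coordinate while the paper uses the branched double cover $(x,y,z)\mapsto(x,y)$ (both also invoke Thom--Sebastiani), and case (\ref{j4}) uses the same degree-$8$ squaring map $\phi(x,y,z)=(x^2,y^2,z^2)$ to reduce to the $A_1$ singularity $ab+ac+bc-abc$. Your stratification bookkeeping for the ramified cover is in fact slightly cleaner than the paper's, and your explicit connectedness check (needed to pass from $\chi(F_x)$ to $\chi(F_x)-1$) is a point the paper leaves implicit.
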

\begin{proof}
Since $D_Q$ is smooth along $S_0$, assertion (\ref{j1}) follows from Remark \ref{topm}.
The Milnor fiber of $D_Q$ along the stratum $S_{i, j}^0$ can be described as
$$\{(x,y,z)\in \bC^3 \mid xy=1\} \cong \bC^* \times \bC,$$
hence it is homotopy equivalent to a circle. Thus, (\ref{j2}) follows. 

Since $xy^2-z^2$ is a graded homogeneous function, its local Milnor fiber at the origin is equal to the global Milnor fiber $F_t=\{xy^2-z^2=t\}$ in $\bC^3$. The projection $$F_t\to \bC^2, (x, y, z)\mapsto (x, y)$$
is a proper degree 2 map, with ramification locus $\{xy=t\}$. Thus, the ramification locus is isomorphic to $\bC^*$. Therefore, 
$$\chi(F_t)=2\chi(\bC^2)-\chi(\bC^*)=2.$$
In other words, the reduced Euler characteristic is equal to $1$, and hence (\ref{j3}) follows.
In fact, it can easily be seen, as an application of the Thom-Sebastiani theorem, that the Milnor fiber of the Whitney umbrella at the origin is homotopy equivalent to the $2$-sphere $S^2$, see, e.g., \cite[Example 10.1.21]{Max}. 

Denote by $B_\varepsilon$ the ball at the origin in $\bC^3$ of radius $\epsilon>0$. Define 
$$G_t=\{x^2y^2+x^2z^2+y^2z^2-x^2y^2z^2=t\}\cap B_\epsilon$$
Then for $0<|t| \ll\varepsilon \ll 1$, 
$G_t$ is homeomorphic to the Milnor fiber of $D_Q$ at $S_{i, j, k}$. Notice that there exists a proper degree 8 map
$$\Phi: G_t\to \{xy+xz+yz-xyz=t\}\cap B_{\varepsilon^2}, (x, y, z)\mapsto (x^2, y^2, z^2).$$
Denote $\{xy+xz+yz-xyz=t\}\cap B_{\varepsilon^2}$ by $G'_t$. Then the map $\Phi$ is generically 8-to-1, and ramifies along the disjoint union 
$$\big(G'_t\cap\{x=y=0\}\big)\sqcup \big(G'_t\cap\{x=z=0\}\big)\sqcup \big(G'_t\cap\{y=z=0\}\big). $$
Over the ramification locus, the map $\Phi$ is 2-to-1. Since $G'_t\cap\{x=y=0\}$ is a punctured disc, its Euler characteristic is zero. Thus we have
\begin{equation}\label{equ_8times}
\chi(G_t)
=8\chi(G'_t).
\end{equation}
Notice that the hypersurface $xy+xz+yz-xyz=0$ has an isolated singularity at the origin in $\bC^3$, so the corresponding Milnor fiber $G'_t$ is homotopy equivalent to a bouquet of $2$-spheres. The number of the $2$-spheres in this bouquet is the Milnor number, which can easily be computed (by using the Jacobian ideal) to be $1$. 
Therefore $G'_t\simeq S^2$, so $\chi(G'_t)=2$, By equation (\ref{equ_8times}), we get that $\chi(G_t)=16$, and hence the Euler characteristic of the reduced cohomology of $G_t$ is $15$. 
\end{proof}

We can now complete the proof of Theorem \ref{thm_vision}.
\begin{proof}[Proof of Theorem \ref{thm_vision} (\ref{i2})]
We will apply Theorem \ref{thm_euler} with $V=D_Q$ and $W=D'$, together with the Whitney stratification of Proposition \ref{prop_whitney} and the Milnor fiber calculations of Proposition \ref{prop_reduced}.

By our construction, each stratum $S_{i, j, k}$ consists of a single point and each $S_{i, j}^1$ consists of 4 points. Each $S_{i, j}^0$ is equal to $D_{\infty, i}\cap D_{\infty, j}$ with $n+2$ points removed: 4 points from $S_{i, j}^1$ and $n-2$ points from $S_{i, j, k}$ ($k\neq i, j$). Since $D'$ is chosen to be general, it does not intersect any of the zero-dimensional strata $S_{i, j}^1$ and $S_{i, j, k}$. The intersection $D_{\infty, i}\cap D_{\infty, j}$ is the preimage of a general line in $\bP^3$ under the blowup map $\pi: Y_n\to \bP^3$. Since as a divisor of $Y_n$,
$$D'\equiv 2n D_H+2E_1+\cdots+E_n$$
the divisor $D'$ intersects the line $D_{\infty, i}\cap D_{\infty, j}$ transversally at $2n$ points. Thus, each $S_{i, j}^0\setminus D'$ is equal to a line $\bP^1$ with $3n+2$ points removed, and hence
\begin{equation}\label{last1}
\chi(S_{i, j}^0\setminus D')=-3n. 
\end{equation}
Since each $S_{i, j}^1$ consists of 4 points and $S_{i, j, k}$ consists of one point, and since none of them is contained in $D'$, we have
\begin{equation}\label{last2}
\chi(S_{i, j}^1\setminus D')=4 \textrm{ and } \chi(S_{i, j, k}\setminus D')=1. 
\end{equation}
By plugging (\ref{last1}), (\ref{last2}), and the calculations of Proposition \ref{prop_reduced}
 into equation (\ref{equ_euler}) , we conclude that
\begin{align*}
\chi(D')-\chi(D_Q)&=-\sum_{i< j}\chi(S_{i, j}^0\setminus D')+\sum_{i<j}\chi(S_{i, j}^1\setminus D')+\sum_{i<j<k}15\chi(S_{i, j, k}\setminus D')\\
&=3n{n \choose 2}+4{n \choose 2}+15{n \choose 3}\\
&=4n^3-7n^2+3n. 
\end{align*}
By using equation (\ref{equ_gen}), we get that 
\begin{equation}
\chi(D_Q)=4n^3-9n^2+9n,
\end{equation}
thus proving assertion (\ref{i2}) of Theorem \ref{thm_vision}.
\end{proof}


\bibliographystyle{abbrv}
\bibliography{REF_ED_Degree}

\end{document}